\def\date{\today}
\newtheorem{theorem}{Theorem}
\newtheorem{lemma}[theorem]{Lemma}
\newtheorem{corollary}[theorem]{Corollary}
\newtheorem{proposition}[theorem]{Proposition}
\newtheorem{claim}{Claim}
\newtheorem*{claim*}{Claim}
\theoremstyle{definition}
\newtheorem{remark}[theorem]{Remark}
\newtheorem*{definition*}{Definition}
\newtheorem*{remark*}{Remark}
\newtheorem*{remarks*}{Remarks}
\newcommand{\abs}[1]{\left|{#1}\right|}
\begin{document}
%\font\smallrm=cmr8

\phantom{a}\vskip .25in
\centerline{{\large \bf  FOUR EDGE-INDEPENDENT SPANNING TREES}
\footnote{%
%\texttt{thomas@math.gatech.edu}. 
Partially supported by NSF under Grant No.~DMS-1202640.}
\footnote{Presented at the 29th Cumberland Conference on Combinatorics, Graph Theory and Computing at Vanderbilt University.}
}

\vskip.4in
%authors
\centerline{{\bf Alexander Hoyer}}
\medskip
\centerline{and}
\medskip
\centerline{{\bf Robin Thomas}}
%\footnote{%
%%\texttt{thomas@math.gatech.edu}. 
%Partially supported by NSF under Grant No.~DMS-1202640.}}
\medskip
\centerline{School of Mathematics}
\centerline{Georgia Institute of Technology}
\centerline{Atlanta, Georgia  30332-0160, USA}
%end of authors

\vskip 1in \centerline{\bf ABSTRACT}
\bigskip\bigskip

{
%\parshape=1.0truein 5.5truein
\noindent
We prove an ear-decomposition theorem for $4$-edge-connected graphs and use it to prove that for every $4$-edge-connected graph $G$ and every $r\in V(G)$, there is a set of four spanning trees of $G$ with the following property. For every vertex in $G$, the unique paths back to $r$ in each tree are edge-disjoint. Our proof implies a polynomial-time algorithm for constructing the trees.
}

\vfill \baselineskip 11pt \noindent March 2017. Revised October 2017.

\vfil\eject
%End of title page
%%%%%%%%%%%%%%%%%%%%%%%%%%%%%%%%%%%%%%%%%%%%%%%%%%%%%%%%%%%%%%%%%%%%%%%%%%%
\baselineskip 18pt

\vskip .75in

\section{Introduction}
\noindent
If $r$ is a vertex of a graph $G$, two subtrees $T_1,T_2$ of $G$ are \textit{edge-independent with root $r$} if each tree contains $r$, and for each $v\in V(T_1)\cap V(T_2)$, the unique path in $T_1$ between $r$ and $v$ is edge-disjoint from the unique path in $T_2$ between $r$ and $v$. Larger sets of trees are called \textit{edge-independent with root $r$} if they are pairwise edge-independent with root $r$.

Itai and Rodeh~\cite{ItaiRodeh} posed the Edge-Independent Tree Conjecture, that for every $k$-edge-connected graph $G$ and every $r\in V(G)$, there is a set of $k$ edge-independent spanning trees of $G$ rooted at $r$. Here, we prove the case $k=4$ of the Edge-Independent Tree Conjecture. That is, we prove the following:

\begin{theorem}\label{thmTrees}
If $G$ is a $4$-edge-connected graph and $r\in V(G)$, then there exists a set of four edge-independent spanning trees of $G$ rooted at $r$.
\end{theorem}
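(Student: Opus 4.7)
The plan is to prove Theorem~\ref{thmTrees} by induction along an ear-decomposition tailored to 4-edge-connected graphs, as foreshadowed by the abstract. The first step is to establish the ear-decomposition theorem itself. For 2-edge-connected graphs every ear can be a single path, but for higher edge-connectivity a single path cannot be added while preserving the connectivity, since the endpoints of that path increase in degree by only one. So the decomposition will have to add paths in groups (or add more elaborate ``generalized ears'') in such a way that after each step the current subgraph is either 4-edge-connected or at least close enough to it that the inductive hypothesis applies. The precise statement would describe a base graph $G_0$ together with a sequence of allowed attachments $G_0 \subset G_1 \subset \cdots \subset G_n = G$; one would likely prove it by a splitting-off argument in the spirit of Mader, reducing to a minimally 4-edge-connected graph and peeling off an appropriate structure.

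The second step is the induction itself. I would choose the ear decomposition so that $r \in V(G_0)$ and so that $G_0$ is a small, explicit 4-edge-connected graph (for example, a two-vertex graph with four parallel edges, or some other canonical object) for which four edge-independent spanning trees rooted at $r$ can be exhibited by hand. Assuming four edge-independent spanning trees $T_1,T_2,T_3,T_4$ of $G_{i-1}$ rooted at $r$, the goal is to extend them to $G_i$: each edge of the newly added ear(s) is assigned to one of the $T_j$, each new vertex is given four pairwise edge-disjoint root-paths, and the paths already existing in $G_{i-1}$ must remain valid.

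The main obstacle is this inductive extension. A newly attached ear has a number of attachment edges into $G_{i-1}$, at least four by the design of the decomposition, and the problem is to choose which attachment edge ``feeds'' the root-path in each $T_j$ for each new internal vertex of the ear, while making the paths edge-disjoint all the way back to $r$. The existing root-paths in $T_1,\ldots,T_4$ from the attachment vertices are not arbitrary, so one must argue that the attachment edges can be routed through the four trees consistently; this is where 4-edge-connectivity and the shape of the ears enter in an essential way. I expect this step to require a case analysis over the possible attachment patterns, with each case handled by a direct combinatorial rerouting argument in which a small number of edges may have to be reassigned between trees in $G_{i-1}$ to accommodate the ear. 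Since the ear decomposition is computable in polynomial time and each extension is a local update, the whole procedure yields the polynomial-time algorithm asserted in the abstract.
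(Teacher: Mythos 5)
Your proposal is a plan rather than a proof, and both of its steps have genuine gaps. First, the ``ear-decomposition theorem'' you would need is never formulated, and the property you want from it --- a nested sequence $G_0\subset G_1\subset\cdots\subset G_n=G$ of subgraphs each 4-edge-connected or ``close enough'' --- is not something a Mader-type splitting-off argument delivers. Mader's construction sequence is not a sequence of subgraphs of $G$: a pinch creates a new vertex and replaces edges, so the intermediate graphs are not contained in $G$, and one cannot simply read off nested subgraphs with high edge-connectivity. The paper's decomposition (Theorem~\ref{thmChains}) sidesteps this: its ``chains'' carry degree conditions that refer both to the union of \emph{earlier} chains and to the union of \emph{later} chains, and the prefix unions are only guaranteed to be (essentially) connected, not 4-edge-connected. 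The decomposition is obtained by showing it can be maintained through each Mader operation (adding an edge, pinching two edges), which is a different and more delicate statement than the one you sketch.

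Second, and more seriously, the inductive step you describe --- extending four edge-independent spanning trees of $G_{i-1}$ across a newly attached ear, possibly after ``reassigning a small number of edges between trees'' --- is exactly the crux of the problem, and you leave it as an expectation of a case analysis rather than an argument. There is no reason to believe a local rerouting suffices: the root-paths in the four existing trees can interact with the attachment vertices in globally constrained ways, and this is precisely why the known proofs (Schlipf--Schmidt for $k=3$, Curran--Lee--Yu for the vertex version, and this paper) do \emph{not} maintain trees incrementally. Instead, the paper first constructs the entire chain decomposition and then defines the four trees globally (Theorem~\ref{thmTreesFromChains}): each non-root vertex selects its two incident edges of lowest chain index and its two of highest chain index, two edge numberings $f$ and $g$ are built along the decomposition, and the trees are defined so that root-paths are monotone in $f$ (respectively $g$) and in chain index, which yields edge-independence without ever re-routing previously built trees. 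To turn your outline into a proof you would have to either supply the missing extension lemma (which is the hard part) or adopt a global numbering scheme of this kind, at which point you would essentially be reconstructing the paper's argument.
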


There is a similar conjecture which has been studied in parallel, concerning vertices rather than edges. If $r$ is a vertex of $G$, two subtrees $T_1,T_2$ of $G$ are \textit{independent with root $r$} if each tree contains $r$, and for each $v\in V(T_1)\cap V(T_2)$, the unique path in $T_1$ between $r$ and $v$ is internally vertex-disjoint from the unique path in $T_2$ between $r$ and $v$. Larger sets of trees are called \textit{independent with root $r$} if they are pairwise independent with root $r$.

Itai and Rodeh~\cite{ItaiRodeh} also posed the Independent Tree Conjecture, that for every $k$-connected graph $G$ and for every $r\in V(G)$, there is a set of $k$ independent spanning trees of $G$ rooted at $r$.

The case $k=2$ of each conjecture was proven by Itai and Rodeh~\cite{ItaiRodeh}. The case $k=3$ of the Independent Tree Conjecture was proven by Cheriyan and Maheshwari~\cite{CheriMahesh}, and then independently by Zehavi and Itai~\cite{ZehaviItai}. Huck~\cite{Huck} proved the Independent Tree Conjecture for planar graphs (with any $k$). Building on this work and that of Kawarabayashi, Lee, and Yu~\cite{klylovasz}, the case $k=4$ of the Independent Tree Conjecture was proven by Curran, Lee, and Yu across two papers~\cite{clychain,cly}. The Independent Tree Conjecture is open for nonplanar graphs with $k>4$.

In 1992, Khuller and Schieber~\cite{Khuller} published a later-disproven argument that the Independent Tree Conjecture implies the Edge-Independent Tree Conjecture. Gopalan and Ramasubramanian~\cite{Gopalan} demonstrated that Khuller and Schieber's proof fails, but salvaged the technique, and proved the case $k=3$ of the Edge-Independent Tree Conjecture by reducing it to the case $k=3$ of the Independent Tree Conjecture. Schlipf and Schmidt~\cite{Schlipf} provided an alternate proof of the case $k=3$ of the Edge-Independent Tree Conjecture, which does not rely on the Independent Tree Conjecture. The case $k=4$ of the Edge-Independent Tree Conjecture is proven here, while the case $k>4$ remains open.

By adapting the technique of Schlipf and Schmidt~\cite{Schlipf}, we prove an edge analog of the planar chain decomposition of Curran, Lee, and Yu~\cite{clychain}. We then use this decomposition to create two edge numberings which define the required trees.

The conjectures are related to network communication with redundancy. If $G$ represents a communication network, one can wonder if information can be broadcast through the entire network with resistance to edge failures (i.e. it would require $k$ simultaneous edge failures to disconnect a client from every broadcast). The Edge-Independent Tree Conjecture implies that the absence of edge bottlenecks of size less than $k$ is necessary and sufficient for a redundant broadcast to be possible from any source $r$. The Independent Tree Conjecture answers the analogous problem where vertex failures are the concern, rather than edge failures.

\section{The Chain Decomposition}
\noindent
In this paper, a \textit{graph} will refer to what is commonly called a multigraph. That is, there may be multiple edges between the same pair of vertices (``parallel edges") and an edge may connect a vertex to itself (a ``loop"). All paths and cycles are simple, meaning they have no repeated vertices or edges. We consider a loop to induce a cycle of length one and a pair of parallel edges to induce a cycle of length two. Also, the presence of a loop increases the degree of a vertex by two. We will use the overline notation $\overline{H}$ to name specific subgraphs, rather than for the graph complement.

Throughout this section, fix a graph $G$ with $\abs{V(G)}\geq1$ and a vertex $r\in V(G)$. We begin by defining a decomposition analogous to the planar chain decomposition in~\cite{clychain}.

\begin{definition*}
An \textit{up chain} of $G$ with respect to a pair of edge-disjoint subgraphs ($H$, $\overline{H}$) is a subgraph of $G$, edge-disjoint from $H$ and $\overline{H}$, which is either:
\begin{enumerate}[label=\roman*]
\item A path with at least one edge such that every vertex is either $r$ or has degree at least two in $\overline{H}$, and the ends are either $r$ or are in $H$, OR
\item A cycle such that every vertex is either $r$ or has degree at least two in $\overline{H}$, and some vertex $v$ is either $r$ or has degree at least two in $H$. We will consider $v$ to be both ends of the chain, and all other vertices in the chain to be internal vertices.
\end{enumerate}
Chains which are paths will be called \textit{open} and chains which are cycles will be called \textit{closed}, analogous to the standard ear decomposition.
\end{definition*}

\begin{definition*}
A \textit{down chain} of $G$ with respect to a pair of edge-disjoint subgraphs ($H$, $\overline{H}$) is an up chain with respect to ($\overline{H}$, $H$).
\end{definition*}

\begin{definition*}
A \textit{one-way chain} of $G$ with respect to the pair of edge-disjoint subgraphs ($H$, $\overline{H}$) is a subgraph of $G$, induced by an edge $e\notin H\cup\overline{H}$ with ends $u$ and $v$, such that $u$ is either $r$ or has degree at least two in $H$, and $v$ is either $r$ or has degree at least two in $\overline{H}$. We call $u$ the \textit{tail} of the chain and $v$ the \textit{head}.
\end{definition*}

\begin{definition*}
Let $G_0,G_1,\ldots,G_m$ be a sequence of subgraphs of $G$. Denote $H_i=G_0\cup G_1\cup\cdots\cup G_{i-1}$ and $\overline{H_i}=G_{i+1}\cup G_{i+2}\cup\cdots\cup G_m$, so that $H_0$ and $\overline{H_m}$ are the null graph. We say that the sequence $G_0,G_1,\ldots,G_m$ is a \textit{chain decomposition} of $G$ rooted at $r$ if:
\begin{enumerate}
\item The sets $E(G_0),E(G_1),\ldots,E(G_m)$ partition $E(G)$, AND
\item For $i=0,\ldots,m$, the subgraph $G_i$ is either an up chain, a down chain, or a one-way chain with respect to the subgraphs $(H_i,\overline{H_i})$.
\end{enumerate}
\end{definition*}

\begin{figure}[h]
\centering
\includegraphics[scale=0.6]{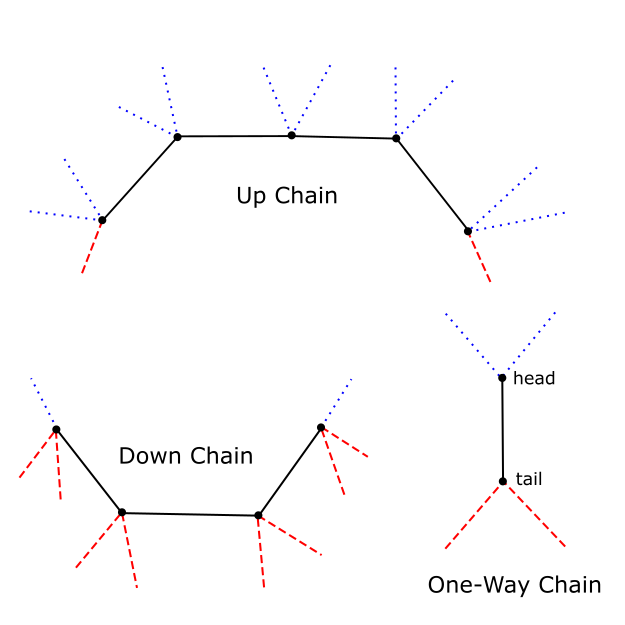}
\caption{An illustration of an up chain of length 4, a down chain of length 3, and a one-way chain. The red/dashed edges are in earlier chains, while the blue/dotted edges are in later chains.}
\label{figChains}
\end{figure}

\begin{definition*}
The \textit{chain index} of $e\in E(G)$, denoted $CI(e)$, is the index of the chain containing $e$.
\end{definition*}

\begin{definition*}
An up chain $G_i$ is \textbf{minimal} if no internal vertex of $G_i$ is in $\{r\}\cup V\left(H_i\right)$.
\end{definition*}

\begin{definition*}
A down chain $G_i$ is \textbf{minimal} if no internal vertex of $G_i$ is in $\{r\}\cup V\left(\overline{H_i}\right)$.
\end{definition*}

\begin{definition*}
A chain decomposition is \textbf{minimal} if all of its up chains and down chains are minimal.
\end{definition*}

\begin{remarks*}
\ \\\vspace{-15pt}
\begin{enumerate}
\item A minimal up chain is a special case of an ear in the standard ear decomposition.
\item The chain decomposition is symmetric in the following sense. If $G_0,G_1,\ldots,G_m$ is a chain decomposition rooted at $r$, then $G_m,G_{m-1},\ldots,G_0$ is a chain decomposition rooted at $r$, with the up and down chains switched and the heads and tails of one-way chains switched. Throughout this paper, we will refer to this fact as ``symmetry".
\item $G_0$ is either a closed up chain ending at $r$ or a one-way chain with $r$ as the tail, and $G_m$ is either a closed down chain ending at $r$ or a one-way chain with $r$ as the head.
\item In the planar chain decomposition in~\cite{clychain}, up chains and down chains are analogous to the corresponding open chains. The elementary chain is analogous to a one-way chain.
\end{enumerate}
\end{remarks*}

\begin{remark}\label{rmkMin}
An up chain or down chain may be subdivided into several minimal chains by breaking at the offending internal vertices. These minimal chains may then be inserted consecutively to the decomposition at the index of the old chain. In this way, one can easily obtain a minimal chain decomposition from any chain decomposition.
\end{remark}

We will prove Theorem~\ref{thmTrees} by combining the following results:

\begin{theorem}\label{thmChains}
If $G$ is a $4$-edge-connected graph and $r\in V(G)$, then $G$ has a chain decomposition rooted at $r$.
\end{theorem}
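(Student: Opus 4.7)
The plan is to construct the chain decomposition $G_0, G_1, \ldots, G_m$ greedily, one chain at a time from $G_0$ onward. At stage $i$, the edges already placed form $H_i = G_0 \cup \cdots \cup G_{i-1}$ and the remaining edges form $R_i = E(G) \setminus E(H_i)$; we choose $G_i \subseteq R_i$ to be a valid chain with respect to $(H_i, R_i \setminus G_i)$. The validity of an earlier chain $G_j$ depends only on degrees of its vertices in $\overline{H_j} = R_{j+1}$, which is fixed once $G_0, \ldots, G_j$ have been chosen, so past chains remain valid as the construction proceeds. The process terminates after at most $|E(G)|$ stages, when $R_{m+1} = \emptyset$.

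The initial chain $G_0$ is easy to exhibit. Since $G$ is $4$-edge-connected, $r$ has degree at least $4$; pick any edge $e$ at $r$ with other end $u$ and take $G_0 = \{e\}$ as a one-way chain with tail $r$. Then $u$ either equals $r$ (if $e$ is a loop) or has degree at least $3$ in $G \setminus e$, so the one-way chain conditions are satisfied.

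The heart of the argument is the following key lemma: \emph{if a valid partial decomposition has nonempty remainder $R_i$, then there exists a valid next chain $G_i \subseteq R_i$}. To prove it, I would argue by contradiction, assuming no valid chain exists, and extract an edge cut of size less than $4$ in $G$. Classify each vertex $v$ as \emph{$H$-anchored} if $v = r$ or $v$ has degree at least $2$ in $H_i$, and \emph{$R$-anchored} if $v = r$ or $v$ has degree at least $2$ in $R_i$. Since every vertex has degree at least $4$ in $G$, every vertex is anchored in at least one of $H_i$ or $R_i$. The non-existence of a one-way chain forces every edge $e = uv \in R_i$ to fail both the condition ``$u$ is $H$-anchored and $v$ equals $r$ or has degree at least $2$ in $R_i \setminus e$'' and the symmetric version with $u$ and $v$ swapped. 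The non-existence of valid up and down chains rules out further paths and cycles in $R_i$ meeting prescribed endpoint and internal-degree conditions. Combining these restrictions should carve out a vertex subset $S \subseteq V(G)$ whose edge boundary in $G$ has fewer than $4$ edges, contradicting $4$-edge-connectivity.

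The main obstacle is the key lemma. The three chain types each impose different endpoint and internal-degree conditions, so the combinatorial case analysis required to produce a small cut is delicate. The correct separating set $S$ is likely carved out from a minimal component of $R_i$, the subgraph spanned by non-anchored vertices, or a carefully chosen minimal edge cut of $G$ crossing the boundary between $H_i$-heavy and $R_i$-heavy regions. Once the key lemma is in hand, Theorem~\ref{thmChains} follows by iteration.
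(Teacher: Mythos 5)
Your plan rests entirely on the ``key lemma'' that any valid partial chain decomposition with nonempty remainder can be extended by one more chain, and that lemma is not just unproven in your sketch --- it is false, so no amount of case analysis will extract a small cut from a stuck configuration. Concretely, let $G$ have vertices $r,u,v$ with three parallel edges between $r$ and $u$, three between $r$ and $v$, and two between $u$ and $v$; this graph is $5$-edge-connected. Greedily take $G_0$ to be one $ru$-edge (one-way chain, tail $r$, head $u$); $G_1$ the triangle on $r,u,v$ using one edge of each kind (closed up chain with end $r$: after it, $u$ and $v$ each still have two remaining incident edges); $G_2$ the last $ru$-edge (one-way chain, tail $u$, head $r$); $G_3$ one $rv$-edge (down chain from $v$ to $r$: $v$ has degree two among placed edges and still lies in the remainder); and $G_4$ the last $rv$-edge (one-way chain, tail $v$, head $r$). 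Each chain is valid with respect to the edges placed before it and remaining after it, yet the remainder is now the single edge $uv$ with $u,v\neq r$ and neither end incident to any other unplaced edge, so it can be completed to no up chain, down chain, or one-way chain. Thus a valid greedy prefix in a highly edge-connected graph can dead-end, and your proposed contradiction argument (``no valid next chain $\Rightarrow$ an edge cut of size $<4$'') cannot go through.

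The paper avoids this entirely by not building the decomposition greedily within a fixed graph. Instead it inducts on Mader's construction of $4$-edge-connected graphs (Theorem~\ref{thmMader}): the base graph --- two vertices joined by four parallel edges, one of them $r$ --- is decomposed as a closed up chain plus a closed down chain, and then each Mader operation (adding an edge, or pinching two edges $e_1,e_2$ into a new vertex) is shown to be absorbable while maintaining a chain decomposition, using Lemma~\ref{lmaDeg2} for edge additions and the case analysis of Claims~\ref{clm1}--\ref{clm5} for pinches. If you want to salvage a direct construction inside $G$ itself, you would need either a backtracking mechanism or a substantially stronger invariant on the partial decomposition (something ruling out configurations like the one above), which is precisely the difficulty the Mader-construction route is designed to sidestep.
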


\begin{theorem}\label{thmTreesFromChains}
Suppose $G$ is a graph with no isolated vertices. If $G$ has a chain decomposition rooted at some $r\in V(G)$, then there exists a set of four edge-independent spanning trees of $G$ rooted at $r$.
\end{theorem}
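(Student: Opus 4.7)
The plan is to use the chain decomposition to produce four edge-independent spanning trees rooted at $r$, generalizing the ear-decomposition construction that proves the $k=2$ case. I would first invoke Remark~\ref{rmkMin} to replace the given chain decomposition $G_0,G_1,\ldots,G_m$ with a minimal one, so that internal vertices of up (resp.\ down) chains avoid $\{r\}\cup V(H_i)$ (resp.\ $\{r\}\cup V(\overline{H_i})$). This is the structural fact that should drive every subsequent case analysis, and it also forces each $v\ne r$ to have degree at least four by combining the two chain-edges at $v$ in its anchor chain with the degree-at-least-two condition in the opposite direction.

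For each vertex $v\ne r$ I would select four incident edges $e_1(v),e_2(v),e_3(v),e_4(v)$ and define $T_j=\{e_j(v):v\ne r\}$. The first two are to be drawn from what I will call the \emph{up-anchor chain} of $v$: the chain $G_i$ of smallest index in which $v$ is either an internal vertex of an up chain or the head of a one-way chain. When $G_i$ is an up chain with $v$ internal, $v$ has exactly two chain-edges there, one leading to each end of $G_i$, which serve as $e_1(v)$ and $e_2(v)$; when $G_i$ is a one-way chain, $v$ has only one such edge, and the second is drawn from the next chain in which $v$ is similarly anchored. The edges $e_3(v),e_4(v)$ are produced symmetrically from the \emph{down-anchor chain} of $v$, using the reversal symmetry of the chain decomposition noted in the remarks after its definition.

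The verification then has two parts. First, each $T_j$ is a spanning tree: iterating the $e_1$- or $e_2$-edge selection from any $v$ always moves to a vertex whose up-anchor chain has strictly smaller index, so the process cannot cycle and must terminate at $r$; the argument for $T_3,T_4$ is symmetric under chain reversal. Second, the four $v$-to-$r$ paths are pairwise edge-disjoint: the $T_1,T_2$-paths are supported on chains of index at most the up-anchor of $v$, while the $T_3,T_4$-paths are supported on chains of index at least the down-anchor of $v$, so these two pairs live on disjoint ranges of chain indices; within each pair, edge-disjointness follows by the standard ear-style induction used in the $k=2$ case, since the two paths diverge immediately at the two distinct ends of the anchor chain and never re-cross by the minimality condition.

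The main obstacle I expect is the careful treatment of one-way chains, which contribute only a single incident edge at the vertex they anchor and have no counterpart in the pure ear decomposition. When $v$'s up-anchor chain (or down-anchor chain) is a one-way chain, a secondary construction must extract a second edge from a later chain, and the edge-independence argument must be adjusted to verify that this secondary edge does not collide with the edges used by $T_3,T_4$. I also anticipate needing a short supporting lemma ensuring that for every $v\ne r$ both the up-anchor and down-anchor chains exist and have disjoint index ranges, which should follow from the degree-at-least-two conditions built into the chain definitions, but will likely require nontrivial case work over the five possible types of first chain containing $v$.
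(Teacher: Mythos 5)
Your overall architecture matches the paper's: pass to a minimal chain decomposition, at each $v\neq r$ take the two incident edges of lowest chain index and the two of highest chain index (your ``up-anchor''/``down-anchor'' edges are exactly the paper's $f$-edges and $g$-edges, and your supporting lemma about existence and index separation is the paper's Lemma~\ref{lmaDeg2} plus Lemma~\ref{lmaMinDeg4}), build two pairs of trees, and get cross-pair independence from the strict gap in chain index between the initial edges. That part is sound. The genuine gap is inside each pair. First, your stated invariant is false: following $e_1$-edges does \emph{not} move to a vertex whose up-anchor chain has strictly smaller index, since consecutive internal vertices of the same up chain share that chain as their anchor; the index is only non-increasing, so ``cannot cycle'' does not follow, and you have not even ruled out that the same edge is chosen as the $T_1$-edge at both of its ends (which would make $T_1$ disconnected). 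Second, the claim that the $T_1$- and $T_2$-paths ``diverge at the two ends of the anchor chain and never re-cross by minimality'' is unsubstantiated. Minimality only constrains the internal vertices of a chain; it says nothing about the two descending paths after they exit at the ends $u$ and $w$ and continue through $H_i$, where they can perfectly well share an edge, because $T_1$ and $T_2$ are not edge-disjoint as sets: an edge may be the lower choice at one endpoint and the higher choice at the other, so a collision $e_1(a)=e_2(b)$ on the two paths is possible unless the choices at different vertices are globally coordinated.

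That coordination is exactly what the paper supplies and what your proposal lacks: a global numbering $f$ of the edges of up chains and one-way chains, built chain by chain so that values vary monotonically along each chain and are sandwiched between the $f$-values of already-numbered ``numbering edges'' at the chain's ends (and a symmetric numbering $g$ for the reversed decomposition). With $T_1$/$T_2$ defined by comparing $f$-values of the two $f$-edges at each vertex, one gets the key claim that along $T_1$ the $f$-value strictly decreases (and along $T_2$ strictly increases), which simultaneously yields distinctness of the assigned edges, acyclicity, termination at $r$, and within-pair edge-disjointness (all $T_1$-path edges have smaller $f$-value than all $T_2$-path edges). Your appeal to ``the standard ear-style induction from the $k=2$ case'' is precisely an appeal to such a numbering argument, so to complete your proof you would need to construct it — including the delicate sandwiching at chain ends and the treatment of closed chains and one-way chains — which is the substantive content of the paper's proof rather than a routine step.
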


\section{Preliminary Results}

While not needed for our main results, the following proposition demonstrates how the chain decomposition fits in with the various decompositions used in other cases of the Independent Tree Conjecture and Edge-Independent Tree Conjecture. A partial chain decomposition and its complement are ``almost $2$-edge-connected" in the following sense.

\begin{proposition}\label{prp2con}
Suppose $G_0,G_1,\ldots,G_m$ is a chain decomposition of a graph $G$ rooted at $r$. Then for $i=1,\ldots,m$, $H_i$ and $\overline{H_{i-1}}$ are connected. Further, if $e$ is a cut edge of $H_i$ (resp. $\overline{H_{i-1}}$), then $e$ induces a one-way chain and one component of $H_i-e$ (resp. $\overline{H_{i-1}}-e$) contains one vertex and no edges.
\end{proposition}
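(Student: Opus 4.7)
My plan is to establish the claim for $H_i$ by induction on $i$ and then invoke symmetry (applied to the reversed chain decomposition $G_m, G_{m-1}, \ldots, G_0$) to obtain the claim for $\overline{H_{i-1}}$. To make the induction go through I strengthen the hypothesis: for each $i \geq 1$, $H_i$ is connected, and every cut edge $e$ of $H_i$ constitutes a one-way chain $G_j$ (with $j < i$) whose head $v$ has degree exactly $1$ in $H_i$, so that $\{v\}$ is the single-vertex component of $H_i - e$. The base case $i = 1$ reduces to the third remark after the definition of a chain decomposition: $G_0$ is either a closed up chain at $r$ (a cycle, hence no cut edges) or a one-way chain with tail $r$ (a single edge for which the claim is immediate).

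For the inductive step, form $H_{i+1} = H_i \cup G_i$. Connectivity of $H_{i+1}$ holds because $G_i$ always shares a vertex with $V(H_i)$: an up chain has its endpoints or distinguished cycle vertex there; a down chain has every vertex there (each is either $r$ or has degree at least two in $H_i$); and a one-way chain has its tail there. Here I use $r \in V(H_1) \subseteq V(H_i)$. Now let $e$ be any cut edge of $H_{i+1}$. If $e \in E(G_i)$, then $G_i$ cannot be an up chain, a down chain, or a one-way chain whose head already lies in $V(H_i)$, because in each such case the connected graph $H_i$ contains a path between two attachment vertices of $G_i$ and together with $G_i$ this places $e$ on a cycle of $H_{i+1}$; the only remaining possibility is that $G_i = \{uv\}$ is a one-way chain with $v \notin V(H_i)$, in which case $v$ has degree $1$ in $H_{i+1}$, as required. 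If instead $e \in E(H_i)$, then $e$ was already a cut edge of $H_i$ (any cycle in $H_i$ persists in $H_{i+1}$), so by the inductive hypothesis $e$ is a one-way chain $G_j$ whose head $v$ has degree $1$ in $H_i$.

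The main obstacle is verifying that $v \notin V(G_i)$ in this last situation, so that $v$ remains of degree $1$ in $H_{i+1}$. If $v$ lay in $V(G_i)$, then because $v \neq r$ (otherwise $v \in V(H_j)$ would have forced $e$ to close a cycle in $H_{j+1}$) and $v$ has degree $1$ in $H_i$, $v$ cannot occupy an attaching position of $G_i$: a down chain cannot contain $v$ at all, and for up chains and one-way chains the attachment conditions force some other attaching vertex $w \in V(H_i) \setminus \{v\}$ (either the tail of a one-way chain, the distinguished vertex of a closed up chain, or the other endpoint of an open up chain). A $G_i$-path from $v$ to $w$ concatenated with a $(H_i - e)$-path from $w$ to the other endpoint of $e$ would then bypass $e$, contradicting that $e$ is a cut edge of $H_{i+1}$. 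Hence $v \notin V(G_i)$ and $\{v\}$ remains the single-vertex component. The strengthened hypothesis is essential here: we must carry forward the full single-vertex-component structure, not merely existence of a cut edge, in order to rule out $v \in V(G_i)$ via the attachment conditions. The analogous claim for $\overline{H_{i-1}}$ then follows by symmetry.
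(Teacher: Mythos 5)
Your proof is correct, but it is organized quite differently from the paper's. You run an induction on the prefix $H_1\subseteq H_2\subseteq\cdots$ with a strengthened invariant (every cut edge is a one-way chain whose \emph{head} has degree exactly one in $H_i$), and the heart of your argument is showing this invariant survives the addition of $G_i$: if the degree-one head were touched by $G_i$, the attachment conditions would supply a second attaching vertex $w\neq v$ in $V(H_i)$ and hence a cycle through $e$, killing the cut edge. The paper instead argues directly, with no induction: connectivity comes from each chain meeting $\{r\}\cup V(H_i)$; a cut edge $e$ cannot lie in an up or down chain because $H_{CI(e)}$ is connected and already closes a cycle through $e$; and the ``one vertex, no edges'' conclusion is obtained by an extremal argument --- take an edge $e'$ of minimal chain index in the component $C$ of $H_i-e$ avoiding $r$ and observe that the chain $G_{CI(e')}$ must attach to strictly earlier edges inside $C$, contradicting minimality. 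Your inductive route buys the slightly sharper structural statement that the isolated component is precisely the head of the one-way chain (which is only implicit in the paper's proof), at the cost of carrying the strengthened hypothesis and a longer case analysis; the paper's extremal argument is shorter and needs no induction. Two small points to tidy in your write-up: for a \emph{closed} up chain the phrase ``path between two attachment vertices'' does not literally apply (there is only one end), but the chain is itself a cycle, which is all you need; and your parenthetical justification that $v\neq r$ quietly assumes $j\geq 1$ --- for $j=0$ a one-way chain with head $r$ would have to be a loop at $r$, which is never a cut edge, so the conclusion still holds.
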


\begin{proof}
By symmetry, we need only prove the result for the $H_i$'s. The connectivity follows from the fact that every type of chain is connected and contains at least one vertex in an earlier chain.

Suppose $e$ is a cut edge of some $H_i$. Since $e$ is an edge in $H_i$, we have $CI(e)<i$ and $H_{CI(e)}\subset H_i$. We also know that $H_{CI(e)}$ is connected by the previous paragraph. Then $e$ cannot be part of an up chain, or else $e$ would be part of a cycle formed by the chain $G_{CI(e)}$ and a path in $H_{CI(e)}$ between the ends of $G_{CI(e)}$ (if $G_{CI(e)}$ is open; else the chain itself is a cycle). Also, $e$ cannot be part of a down chain, or else $e$ would be part of a cycle formed by $e$ and a path in $H_{CI(e)}$ between the ends of $e$. Therefore, $e$ induces a one-way chain.

Let $C$ be the component of $H_i-e$ not containing $r$, and suppose for the sake of contradiction that $C$ contains an edge. Let $e'$ be an edge of $C$ with minimal chain index. Consider $G_{CI(e')}$, the chain containing $e'$. Regardless of the chain type, some vertices in $V(G_{CI(e')})$ are incident to at least two edges in $H_{CI(e')}\subset H_i$ since $r\notin C$, so one of these edges is not $e$. This contradicts the minimality of $CI(e')$.
\end{proof}

The next lemma and its corollary will allow us to ignore the possibility of loops in the graph when convenient.

\begin{lemma}\label{lmaNoLoops}
Suppose $G_0,G_1,\ldots,G_m$ is a chain decomposition of $G$ rooted at $r$. If $v\neq r$ is in $H_i$ (resp $\overline{H_i}$), then $v$ is incident to a non-loop edge in $H_i$ (resp $\overline{H_i}$). If $v$ has degree at least two in $H_i$ (resp. $\overline{H_i}$), then $v$ is incident to two distinct non-loop edges in $H_i$ (resp. $\overline{H_i}$).
\end{lemma}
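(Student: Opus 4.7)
The plan is to induct on $i$, proving both claims simultaneously for $H_i$, and then to appeal to the symmetry of chain decompositions (remark 2 in the preceding list) to transfer the statements to $\overline{H_i}$. The base case $H_0=\emptyset$ is vacuous. For the inductive step, write $H_{i+1}=H_i\cup G_i$ and split on the type of $G_i$.

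For the first claim, let $v\neq r$ lie in $V(H_{i+1})$. If $v\in V(H_i)$ the inductive hypothesis applies, so suppose $v$ is new. A down chain $G_i$ is ruled out because every non-root vertex of a down chain has degree at least two in $H_i$, making $v$ old. If $G_i$ is a one-way chain with edge $e$, then $v$ cannot be the tail and $e$ cannot be a loop at $v$, since either would force $v\in V(H_i)$; hence $e$ is a non-loop edge incident to $v$. If $G_i$ is an up chain, its ends lie in $V(H_i)\cup\{r\}$, so $v$ must be internal, and since paths and cycles are required to be simple, $v$ is then incident to two distinct non-loop edges of $G_i$.

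For the second claim, suppose $v\neq r$ has degree at least two in $H_{i+1}$. If $v$ already has degree at least two in $H_i$, apply the inductive hypothesis; otherwise the missing degree comes from $G_i$. Down chains again force $v$ to have degree at least two in $H_i$, reducing to the hypothesis. For a one-way chain, the edge at $v$ in $G_i$ is a non-loop (as in the first case analysis) and $v$ must have degree exactly one in $H_i$, an edge which by the first claim applied to $H_i$ is a non-loop; together these give two distinct non-loop edges in $H_{i+1}$. For an up chain, an internal vertex $v$ picks up two distinct non-loop edges directly from $G_i$, while a non-root end must lie in $V(H_i)$ and combines a non-loop edge from $H_i$ (via the first claim) with the non-loop end-edge of $G_i$.

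The main subtlety will be the degenerate chains — closed up chains that are a single loop, and one-way chains that are themselves loops — because they contribute to the degree at only one vertex and that contribution is via a loop. In each case, however, the definitional condition on the distinguished vertex (being $r$ or having degree at least two in $H_i$) forces that vertex to already be old in $H_i$ and to satisfy the inductive hypothesis, so these edge cases are cleanly absorbed into the case analysis above.
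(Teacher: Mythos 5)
Your proof is correct, but it takes a genuinely different route from the paper's. You run an induction on the prefix index $i$, proving both claims for $H_{i+1}=H_i\cup G_i$ by a case analysis over the type of $G_i$ (with the degenerate single-loop chains absorbed by noting that their distinguished vertex must already have degree at least two in $H_i$), and then transfer to $\overline{H_i}$ by the reversal symmetry. The paper avoids induction and the chain-type case split altogether: it first observes that the second claim implies the first (a loop contributes $2$ to the degree), and then, given a loop at $v$ in $H_i$, picks the incident loop of minimal chain index $j<i$; since all three chain definitions coincide for a loop, classifying that loop as a chain already forces $v\;(\neq r)$ to have degree at least two in $H_j$, and minimality of $j$ ensures $v$ has no incident loops in $H_j$, so two distinct non-loop edges exist in $H_j\subseteq H_i$. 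The paper's argument is shorter and exploits the fact that a loop can only ever be appended at a vertex that is already "well attached" on both sides; your argument is longer and more case-heavy but entirely elementary and self-contained, and along the way it re-verifies structural facts (ends of up chains lie in $V(H_i)\cup\{r\}$, heads of one-way chains account for new vertices) that the paper establishes separately, e.g.\ in Lemma~\ref{lmaDeg2}. Both proofs are valid; no gap.
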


\begin{proof}
Note that the second claim in the lemma implies the first, since a loop increases the degree of a vertex by 2, so it suffices to prove the second claim in the lemma.

Suppose $v$ is incident to a loop, which by symmetry we may assume is in $H_i$. Of all loops incident to $v$, choose the one with minimal chain index $j<i$. Consider the chain classification of $G_j$. The chain definitions all coincide for a loop, and require that $v$($\neq r$) has degree at least two in $H_j$. By the minimality of $j$, $v$ is not incident to any loops in $H_j$. It follows that $v$ is incident to two distinct non-loop edges in $H_j\subset H_i$.
\end{proof}

\begin{corollary}\label{corNoLoops}
Suppose $G_0,G_1,\ldots,G_m$ is a chain decomposition of $G$ rooted at $r$, and $e\in E(G_i)$ is a loop. Then $G_0,G_1,\ldots,G_{i-1},G_{i+1},\ldots,G_m$ is a chain decomposition of $G-e$ rooted at $r$. Further, if $G$ has no isolated vertices, then $G-e$ has no isolated vertices.
\end{corollary}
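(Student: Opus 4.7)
The plan is to prove both claims by two applications of Lemma~\ref{lmaNoLoops}, one for the chain-structure preservation and one for the isolated-vertex claim.

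First I would observe that the chain $G_i$ containing the loop $e$ consists of $e$ alone. A one-way chain is by definition a single edge; an open chain is a simple path, which contains no loops; and a closed chain of length at least two is a simple cycle, which also contains no loops. Only a closed chain of length one accommodates $e$, and such a chain is precisely the loop itself. So removing $G_i$ from the sequence is the same as removing the edge $e$ from $G$.

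Second, I would verify that each remaining $G_j$ (with $j \neq i$) is still a valid chain with respect to the updated pair $(H_j', \overline{H_j}')$, which differs from $(H_j, \overline{H_j})$ only by the loop $e$ being absent on one side. The chain conditions reduce to three types of statements: ``vertex is $r$'', ``vertex is in $H$ (resp.\ $\overline{H}$)'', and ``vertex has degree at least two in $H$ (resp.\ $\overline{H}$)''. The first is unaffected. For the other two, Lemma~\ref{lmaNoLoops} guarantees that any such condition originally satisfied by a non-root vertex is witnessed by non-loop edges, which survive the deletion of $e$; the endpoint membership conditions for non-root vertices follow the same way. Consequently every $G_j$ with $j \neq i$ remains a chain of its original type, and the shortened sequence is a chain decomposition of $G - e$.

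Third, for the isolated-vertex claim, only the vertex $v$ incident to the loop can have its degree changed by the removal of $e$. If $v \neq r$, then the chain condition that made $G_i$ valid forces $v$ to have degree at least two in $H_i$ or in $\overline{H_i}$, and one further appeal to Lemma~\ref{lmaNoLoops} produces two distinct non-loop edges at $v$ in $G$, so $v$ remains non-isolated in $G - e$. I expect the main step requiring attention, though essentially mechanical, to be the case analysis in the second paragraph over the chain types (open/closed, up/down, one-way), since each case must identify which side of $(H_j, \overline{H_j})$ is being modified and then route every relevant condition through Lemma~\ref{lmaNoLoops}. There is no substantive obstacle, because Lemma~\ref{lmaNoLoops} was formulated precisely to make loop deletions invisible to the chain conditions.
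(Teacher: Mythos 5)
Your proposal is correct and takes essentially the same route as the paper's (much terser) proof: the deleted chain is just the loop itself, Lemma~\ref{lmaNoLoops} shows the remaining chain conditions are witnessed by surviving non-loop edges, and the chain conditions at the loop's own vertex force further incident edges, preventing isolation. Your explicit case analysis and your handling of the non-isolation claim for $v\neq r$ match the paper's intent exactly (the paper, like you, leaves the root case implicit), so there is nothing substantive to add.
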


\begin{proof}
The first claim follows from the preceding lemma. For the second, observe that if $e$ is the only edge incident to its end, then it fails the conditions for every chain definition.
\end{proof}

Next, we prove the following useful fact about minimal chain decompositions.

\begin{lemma}\label{lmaDeg2}
Suppose $G$ is a graph with no isolated vertices, $G_0,G_1,\ldots,G_m$ is a minimal chain decomposition of $G$ rooted at $r$, and $v\in V(G)$ with $v\neq r$. Then there are indices $i,j$ so that $v$ has degree exactly two in $H_i$ and $\overline{H_j}$.
\end{lemma}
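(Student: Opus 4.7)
The plan is to track how the degree of $v$ in $H_i$ evolves as $i$ grows from $0$ through $m+1$, where for convenience I extend the notation by setting $H_{m+1}:=G$. Writing $d_i:=\deg_{H_i}(v)$, one has $d_0=0$, $d_{m+1}=\deg_G(v)$, and each increment $d_{i+1}-d_i=\deg_{G_i}(v)$ lies in $\{0,1,2\}$. By symmetry (reversing the chain decomposition), it suffices to produce an index $i\in\{0,\ldots,m\}$ with $d_i=2$; the corresponding index $j$ for $\overline{H_j}$ then follows from the same argument applied to the reversed decomposition. A preliminary check through the chain-type definitions shows that every non-root vertex has degree at least $2$ in $G$ (since a single incident edge is rejected by every chain-type condition at $v$), so the sequence $(d_i)$ must eventually reach or exceed $2$.

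The heart of the proof is the claim that $(d_i)$ cannot jump from $1$ to $3$; equivalently, whenever $\deg_{G_i}(v)=2$ one has $d_i\neq 1$. The condition $\deg_{G_i}(v)=2$ forces $v$ to be either an internal vertex of $G_i$ (viewed as a path or cycle) or the distinguished ``both-ends'' vertex of a closed chain, which leaves four sub-cases. If $v$ is internal to a minimal up chain, the minimality clause yields $v\notin\{r\}\cup V(H_i)$, whence $d_i=0$. If $v$ is internal to a minimal down chain, the down-chain requirement that every vertex be $r$ or have degree at least two in $H_i$, combined with $v\neq r$, forces $d_i\geq 2$. If $v$ is the distinguished vertex of a closed up chain, the distinguished-vertex clause of the up-chain definition gives $d_i\geq 2$. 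If $v$ is the distinguished vertex of a closed down chain, the ``every vertex'' clause of the down-chain definition gives $d_i\geq 2$. In every case $d_i$ avoids the value $1$.

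Once the key claim is established, the first index $i^*$ at which $d_{i^*}\geq 2$ satisfies $d_{i^*}=2$, because its predecessor is either $0$ (so a jump of at most $2$ lands on $2$) or $1$ (so the jump is exactly $1$). To verify $i^*\leq m$ rather than $i^*=m+1$, I appeal to the remark in Section~2 that $G_m$ is either a closed down chain ending at $r$ or a one-way chain with $r$ as head: in either case, the structure of $G_m$ together with $d_m<2$ and $v\neq r$ rules out $v$ being incident to any edge of $G_m$, contradicting $d_{m+1}\geq 2$. The main obstacle will be the case analysis in the key claim, which requires combining the ``every vertex'' and ``some vertex'' clauses of the chain-type definitions with the minimality hypothesis, and keeping straight how swapping $H$ with $\overline{H}$ converts up-chain statements into down-chain statements.
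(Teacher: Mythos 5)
Your argument is correct and is in substance the paper's proof repackaged: the paper inspects the first (and, when that chain is a one-way chain, the second) chain containing $v$ and in effect shows the degree of $v$ in the $H_i$'s goes $0\to2$ or $0\to1\to2$, which is exactly your ``the sequence $d_i$ cannot skip the value $2$'' invariant; the case analysis you run (minimality of up chains forcing internal vertices out of $V(H_i)$, the every-vertex clause of down chains, the head/tail clauses of one-way chains, and the degenerate last chain $G_m$) is the same one the paper uses. The only flaw is that your enumeration of the sub-cases with $\deg_{G_i}(v)=2$ is incomplete: since the graph is a multigraph, $G_i$ could also be a one-way chain induced by a loop at $v$ (this genuinely occurs in the setting where the lemma is applied, as Mader operations can create loops), and such a $v$ is neither an internal vertex of a path or cycle nor the distinguished end of a closed chain. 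The omission is harmless, because the tail clause of the one-way-chain definition then says $v$ is $r$ or has degree at least two in $H_i$, so $v\neq r$ gives $d_i\geq2$ and your key claim still holds; but the sub-case should be listed (the paper's own proof dodges it implicitly because at its chosen indices $v$ has degree at most one in $H_i$, which rules out a loop one-way chain at $v$).
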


\begin{proof}
By symmetry, we need only find $i$. Since $G$ has no isolated vertices, $v$ is in some chain. Consider the chain $G_{i_0}$ containing $v$ so that $i_0$ is minimal. Note that $v\notin V(H_{i_0})$.

If $G_{i_0}$ is an up chain, then $v$ is an internal vertex of $G_{i_0}$ since $v\notin V(H_{i_0})$, so $v$ has degree two in $G_{i_0}$ and degree at least two in $\overline{H_{i_0}}$. Therefore $\overline{H_{i_0}}$ is not null, so $i_0<m$. Then $i=i_0+1$ completes the proof.

The chain $G_{i_0}$ is not a down chain since $v\notin V(H_{i_0})$.

So we may assume that $G_{i_0}$ is a one-way chain, and $v$ must be the head since $v\notin V(H_{i_0})$. Therefore $v$ has degree at least two in $\overline{H_{i_0}}$, so we may consider the next chain to contain $v$, say $G_{i_1}$. Note that $v$ has degree one in $H_{i_1}$ by the definition of $i_1$.

If $G_{i_1}$ is an up chain, then it is open and $v$ is an end of the chain, since the chain decomposition is minimal and $v$ has degree one in $H_{i_1}$. The chain $G_{i_1}$ is not a down chain since $v$ has degree one in $H_{i_1}$. If $G_{i_1}$ is a one-way chain, then $v$ is the head since $v$($\neq r$) does not have degree at least two in $H_{i_1}$. In all cases, $v$ has degree one in $G_{i_1}$ and degree at least two in $\overline{H_{i_1}}$. Therefore $\overline{H_{i_1}}$ is not null, so $i_1<m$. Then $i=i_1+1$ completes the proof.
\end{proof}

Finally, we show that the chain decomposition implies a minimum degree result.

\begin{lemma}\label{lmaMinDeg4}
Suppose $G$ is a graph with no isolated vertices, $G_0,G_1,\ldots,G_m$ is a chain decomposition of $G$ rooted at $r$, and $v\in V(G)$ with $v\neq r$. Then $v$ has degree at least $4$.
\end{lemma}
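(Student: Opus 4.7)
The plan is to analyze the types of the first and last chains in which $v$ appears. Let $a$ be the smallest index with $v \in V(G_a)$ and $b$ the largest with $v \in V(G_b)$; these exist because $G$ has no isolated vertices. Note that $\deg_{H_a}(v) = 0$ and $\deg_{\overline{H_b}}(v) = 0$.

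I first examine $G_a$. By the symmetry of the chain definitions, every vertex of a down chain is either $r$ or has degree at least $2$ in $H_a$; combined with $v \neq r$ and $\deg_{H_a}(v) = 0$, this rules out $G_a$ being a down chain. If $G_a$ is an up chain, then $v$ cannot be an end (ends must lie in $H_a \cup \{r\}$), so $v$ is internal, giving $\deg_{G_a}(v) = 2$ together with $\deg_{\overline{H_a}}(v) \geq 2$ from the vertex condition, and hence $\deg(v) \geq 4$ immediately. If $G_a$ is a one-way chain, then $v$ must be the head (since the tail lies in $H_a \cup \{r\}$), so $\deg_{G_a}(v) = 1$ and $\deg_{\overline{H_a}}(v) \geq 2$. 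A symmetric analysis of $G_b$ either finishes the proof (if $G_b$ is a down chain with $v$ internal) or leaves $G_b$ as a one-way chain with $v$ as tail, giving $\deg_{G_b}(v) = 1$ and $\deg_{H_b}(v) \geq 2$.

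The main obstacle is the remaining case where $G_a$ and $G_b$ are both one-way chains, with $v$ as head and tail respectively. I will show that $v$ lies in at least $4$ distinct chains; since each chain containing $v$ contributes at least one edge at $v$, this gives $\deg(v) \geq 4$. If $v$ lies in only two chains, then $\deg_{\overline{H_a}}(v) = \deg_{G_b}(v) = 1$, contradicting $\deg_{\overline{H_a}}(v) \geq 2$. If $v$ lies in exactly three chains $G_a, G_c, G_b$ with $a < c < b$, then $\deg_{H_c}(v) = 1$ (contributed entirely by $G_a$) and $\deg_{\overline{H_c}}(v) = 1$ (contributed entirely by $G_b$); however, every classification of $G_c$ — up chain, down chain, one-way with $v$ as head, or one-way with $v$ as tail — requires $v$ to have degree at least $2$ in either $H_c$ or $\overline{H_c}$, a contradiction. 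Hence $v$ lies in at least four chains, and the lemma follows.
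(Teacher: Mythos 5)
Your proof is correct and takes essentially the same approach as the paper: both reduce to the case where the relevant chains at $v$ are one-way chains and then obtain the key contradiction from a ``middle'' chain at $v$ that has only one earlier and one later incident edge, so $v$ can satisfy no chain condition there. The only organizational difference is that you case on the first and last chains containing $v$ (so your middle chain may a priori be of any type) while the paper first disposes of any up or down chain containing $v$ and strips loops via Corollary~\ref{corNoLoops}; your counting is unaffected by loops because the head/tail conditions exclude them where you use degree~$1$ contributions.
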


\begin{proof}
By Corollary \ref{corNoLoops}, we may assume that there are no loops in $G$. If $v$ is in an up chain $G_i$, then $v$ has degree at least $2$ in $\overline{H_i}$, and either degree $2$ in $G_i$ (if $v$ is internal) or degree at least $1$ in $G_i$ and degree at least $1$ in $H_i$ (if $v$ is an end). Either way, $v$ has degree at least $4$ in $G$. By symmetry, the same is true if $v$ is in a down chain.

So we may assume that the only chains containing $v$ are one-way chains. Since $G$ has no isolated vertices, there is at least one such chain $G_j$. Then $v$ has degree $1$ in $G_j$ and degree at least $2$ in $H_j$ (if $v$ is the tail) or $\overline{H_j}$ (if $v$ is the head). We conclude that $v$ has degree at least $3$ in $G$.

Assume for the sake of contradiction that $v$ does not have degree at least $4$. Then $v$ has degree $3$ and is in exactly three one-way chains, say $G_{\ell_1}$, $G_{\ell_2}$, $G_{\ell_3}$ with $\ell_1<\ell_2<\ell_3$. Consider $G_{\ell_2}$. Since we know all of the chains containing $v$, we can say that $v$ has degree $1$ in $H_{\ell_2}$ and degree $1$ in $\overline{H_{\ell_2}}$. This contradicts the definition of a one-way chain, as $v$ can be neither the head nor the tail of the chain $G_{\ell_2}$. We conclude that $v$ has degree at least $4$ as desired.
\end{proof}

\begin{remark*}
If $\abs{V(G)}\geq2$ in addition to $G$ having a chain decomposition and no isolated vertices, then $G$ is $4$-edge-connected so $r$ has degree at least $4$ as well. However, we will not need this result, and it will follow from Corollary \ref{corSummary}.
\end{remark*}

\section{The Mader Construction}

We will adapt the strategy of Schlipf and Schmidt~\cite{Schlipf} in order to construct a chain decomposition. In particular, we will use a construction method for $k$-edge-connected graphs due to Mader~\cite{Mader}. We limit our description of the construction to the needed case $k=4$, since the method is more complicated for odd $k$.

\begin{definition*} A \textit{Mader operation} is one of the following operations:
\begin{enumerate}
\item Add an edge between two (not necessarily distinct) vertices.

\item Consider two distinct edges, say $e_1$ with ends $x$, $y$ and $e_2$ with ends $z$, $w$, and ``pinch" them as follows. Delete the edges $e_1$ and $e_2$, add a new vertex $v$, then add the new edges $e_x,e_y,e_z,e_w$ with one end $v$ and the other end $x,y,z,w$ respectively. While $e_1$ and $e_2$ must be distinct, the ends $x,y,z,w$ need not be. In this case, $v$ will have parallel edges to any repeated vertex.
\end{enumerate}
\end{definition*}

\begin{theorem}[{\cite[Corollary~14]{Mader}}]\label{thmMader}
 A graph $G$ is $4$-edge-connected if and only if, for any $r\in V(G)$, one can construct $G$ in the following way. Begin with a graph $G^0$ consisting of $r$ and one other vertex of $G$, connected by four parallel edges. Then, repeatedly perform Mader operations to obtain $G$.
\end{theorem}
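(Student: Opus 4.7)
The plan is to prove the two directions of the iff separately. The base graph $G^0$ on two vertices joined by four parallel edges is obviously $4$-edge-connected, so for the forward direction it suffices to show that each Mader operation preserves $4$-edge-connectivity, and for the reverse direction to show that every $4$-edge-connected graph on at least three vertices admits the reverse of some Mader operation into a smaller $4$-edge-connected graph, while every $4$-edge-connected graph on exactly two vertices reduces to $G^0$ by edge deletions.

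For the forward direction, adding an edge (operation 1) cannot decrease any cut. For pinching two edges $e_1 = xy$ and $e_2 = zw$ into a new degree-$4$ vertex $v$ with incident edges $e_x, e_y, e_z, e_w$ (operation 2), I would compare, for every bipartition $(A', B')$ of the new vertex set with $v \in A'$, the cut $\delta_{G'}(A')$ to the cut $\delta_G(A' \setminus \{v\})$ in the original graph. A short case analysis on how many of $x, y, z, w$ lie in $B'$ shows $\abs{\delta_{G'}(A')} \geq \abs{\delta_G(A' \setminus \{v\})}$ whenever $A' \neq \{v\}$; when $A' = \{v\}$, the cut has exactly four edges. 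Hence if $G$ is $4$-edge-connected then so is the graph produced by a pinch.

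For the reverse direction, I would induct on $\abs{V(G)} + \abs{E(G)}$. The inductive step finds either (a) an edge $e$ such that $G - e$ remains $4$-edge-connected, reversing operation 1, or (b) a vertex $v \neq r$ of degree exactly $4$ whose four incident edges can be paired into two pairs so that ``unpinching'' $v$ (deleting $v$ and replacing each pair $\{e_a, e_b\}$ by a single edge joining the far endpoints $a$ and $b$) yields a $4$-edge-connected graph, reversing operation 2. Note that an unpinch decreases $\abs{V}+\abs{E}$ by three, and a safe edge deletion by one, so the induction measure drops. When $\abs{V(G)} = 2$, case (a) is available until only the four parallel edges of $G^0$ remain.

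The main obstacle is guaranteeing (b) whenever (a) fails. When no edge can be removed safely, every edge lies on some $4$-edge-cut, and a degree-counting argument forces the existence of a non-root vertex $v$ of degree exactly $4$. Given such a $v$ with neighbors $x, y, z, w$ (counted with multiplicity), I would consider the three possible pairings of $\{e_x, e_y, e_z, e_w\}$ and show that at least one preserves $4$-edge-connectivity. The key tool is submodularity of the cut function,
\[
d(A) + d(B) \geq d(A \cap B) + d(A \cup B),
\]
applied to the tight $4$-edge-cuts separating subsets of $\{x, y, z, w\}$. A careful accounting shows that at most two of the three pairings can be ``blocked'' by some tight cut collapsing to a $3$-cut after the split, leaving a safe pairing. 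This is the heart of Mader's splitting theorem for even edge-connectivity, and the subtle cut bookkeeping between overlapping tight $4$-cuts incident to $v$ is where the real work lives.
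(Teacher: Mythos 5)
The paper does not actually prove this statement; it is imported wholesale as Mader's Corollary~14, with only a short remark explaining why the root $r$ may be kept in $G^0$ (an edge can be deleted unless $G$ is minimally $4$-edge-connected, in which case Mader's Lemma~13 supplies two vertices of degree~$4$ and his Lemma~9 lets one split off such a vertex other than $r$). Your proposal follows exactly this architecture — easy forward direction, and a reverse induction alternating safe edge deletions with splittings at a non-root degree-$4$ vertex — and your forward direction (the cut comparison $\abs{\delta_{G'}(A')}\geq\abs{\delta_G(A'\setminus\{v\})}$ for $A'\neq\{v\}$) is correct and complete.

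The gap is that the reverse direction's two load-bearing claims are asserted rather than proved, and they are precisely the substance of Mader's theorem. First, ``a degree-counting argument forces the existence of a non-root vertex $v$ of degree exactly $4$'' is not a naive count: what is needed is that a minimally $4$-edge-connected graph contains at least two vertices of degree $4$ (so that one of them avoids $r$), which is a theorem of Lick/Mader with its own tight-cut argument, not a handshake-lemma consequence of every edge lying in a $4$-cut. Second, the claim that ``at most two of the three pairings can be blocked,'' i.e.\ that some complete splitting at a degree-$4$ vertex preserves $4$-edge-connectivity among the remaining vertices, is exactly Mader's splitting lemma (equivalently Lov\'asz's splitting-off theorem applied once, followed by suppressing the resulting degree-$2$ vertex); the submodular uncrossing of dangerous sets that makes this work is the entire difficulty, and your sketch gestures at it without carrying it out. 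So as a self-contained proof the proposal is incomplete at its core step; as a reduction to Mader's splitting machinery it is essentially the same route the paper takes by citation, with the rooted refinement handled the same way (always split at a vertex $\neq r$).
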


\begin{remark*}
Mader does not explicitly state that one can include a fixed vertex $r$ in $G^0$, but it follows from his work. His proof starts with $G$, and then reverses one of the Mader operations while maintaining $4$-edge-connectivity. An edge can be deleted unless $G$ is minimally $4$-edge-connected, in which case he finds two vertices of degree $4$ in his Lemma 13. He then shows that any degree $4$ vertex can be ``split off" (the reverse of a pinch) in his Lemma 9, so we can always split off a vertex not equal to $r$.
\end{remark*}

\section{Proof of Theorem~\ref{thmChains}}

Due to Theorem~\ref{thmMader}, it suffices to prove that a chain decomposition can be maintained through a Mader operation. The decomposition in the starting graph $G^0$ is as follows. Two of the edges form a closed up chain. The remaining two edges form a closed down chain.

Suppose the graph $G'$ is obtained from the graph $G$ by a Mader operation, with both graphs $4$-edge-connected. Assume that we have a chain decomposition $G_0,G_1,\ldots,G_m$ of $G$. By Remark~\ref{rmkMin}, we may assume that we have a minimal chain decomposition. We wish to create a new chain decomposition of $G'$.

\subsection{Adding an Edge}

Suppose $G'$ is obtained from $G$ by adding an edge with ends $u$, $v$. If one of the ends is the root $r$, we can classify the new edge as a one-way chain with tail $r$ at, say, the very beginning of the chain decomposition. The head must have at least two incident edges in later chains, since all chains are later.

If neither end is $r$, choose the minimal index $i$ such that $u$ or $v$ has degree exactly two in $H_i$, guaranteed to exist by Lemma~\ref{lmaDeg2}. Note that $i\geq1$ since $H_0$ is null. Without loss of generality, $u$ has degree exactly two in $H_i$. By the definition of $i$, $v$ has degree at most two in $H_i$, and therefore degree at least two in $\overline{H_{i-1}}$. We classify the new edge as a one-way chain with tail $u$ and head $v$, between the chains $G_{i-1}$ and $G_i$.

We consider the impact of these changes on other chains in the graph. A new chain was added, but none of the other chains changed index relative to each other. Vertices may have increased degree in the $H_i$'s or the $\overline{H_i}$'s due to the new edge, but increasing degree does not invalidate any chain types. Note that some chains may no longer be minimal, so the new chain decomposition in $G'$ is not necessarily minimal.

\subsection{Pinching Edges}

Suppose $G'$ is obtained from $G$ by pinching the edges $e_1$ with ends $x$, $y$ and $e_2$ with ends $z$, $w$, replacing them with edges $e_x,e_y,e_z,e_w$. We will use the notation $J_1=G_{CI(e_1)}=P_xe_1P_y$ for the chain containing $e_1$, where $P_x$ is the subpath between $x$ and an end of $J_1$ so that $e_1\notin E(P_x)$, and $P_y$ is defined similarly. Note that $P_x$ (resp. $P_y$) may have no edges if $x$ (resp. $y$) is an end of $J_1$. In the same way, we will use the notation $J_2=G_{CI(e_2)}=P_ze_2P_w$ for the chain containing $e_2$.

We now prove several claims to deal with all possible chain classification and chain index combinations for $J_1$ and $J_2$.

%%%%%%%%%%%%%%%%%%%
%%%%% CLAIM 1 %%%%%
%%%%%%%%%%%%%%%%%%%

\begin{claim}\label{clm1}
If $CI(e_1)=CI(e_2)$, then $G'$ has a chain decomposition rooted at $r$.
\end{claim}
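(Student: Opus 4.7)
The common chain $J:=J_1=J_2$ cannot be a one-way chain, since those contain a single edge; by the symmetry of chain decompositions, I may assume $J$ is an up chain, and I set $i:=CI(e_1)=CI(e_2)$. My plan is to replace $J$ with three new chains $J^{(1)},\,J^{(2a)},\,J^{(2b)}$ placed consecutively at positions $i,\,i+1,\,i+2$ (shifting all later chains by $2$), each incorporating the pinched vertex $v$.

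First I parameterize $J$. Traversing $J$, I may assume $e_1$ is encountered before $e_2$, and after possibly swapping $\{x,y\}$ and $\{z,w\}$ I write the order along $J$ as $Q_1,\,e_1,\,Q_2,\,e_2,\,Q_3$, where $Q_1$ goes from one end of $J$ to $x$, $Q_2$ from $y$ to $z$, and $Q_3$ from $w$ to the other end. If $J$ is closed with distinguished vertex $v_0$, I orient the traversal so that $v_0$ lies on the outer arc $Q_1\cup Q_3$; this is always possible because $v_0$ belongs to one of the two arcs of $J-\{e_1,e_2\}$. Now set
\begin{itemize}
\item $J^{(1)}:=Q_1\cup\{e_x,e_w\}\cup Q_3$, a ``bypass'' path (or cycle through $v_0$, in the closed case) that replaces $J$ by routing through $v$;
\item $J^{(2a)}:=\{e_y\}$, the single edge from $v$ to $y$;
\item $J^{(2b)}:=Q_2\cup\{e_z\}$, a path from $y$ to $v$ (or just the edge $e_z$, if $Q_2$ is empty, so $y=z$).
\end{itemize}

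I will then verify that each piece is a valid chain at its new index. For $J^{(1)}$ at index $i$, I observe that its endpoints (and $v_0$) remain valid because $H'_i=H_i$; internal vertices that were internal to $J$ retain their degree $\geq 2$ in $\overline{H'_i}\supseteq\overline{H_i}$; and the new vertex $v$ has exactly $2$ edges in $J^{(2a)}\cup J^{(2b)}\subseteq\overline{H'_i}$. For $J^{(2a)}$ at index $i+1$, I classify it as a one-way chain with tail $v$ (degree $2$ in $J^{(1)}\subseteq H'_{i+1}$) and head $y$ (degree $\geq 2$ in $\overline{H'_{i+1}}$, since $y$ was internal to $J$ and its edges in the old $\overline{H_i}$ are preserved). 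For $J^{(2b)}$ at index $i+2$: when $Q_2$ is nonempty, it is an up chain with ends $y$ and $v$ both in $H'_{i+2}$ (via $e_y\in J^{(2a)}$ and via $J^{(1)}$, respectively), whose internal vertices inherit the required $\overline{H}$-degree from the old decomposition; when $Q_2$ is empty, it is a one-way chain from $v$ (tail) to $y=z$ (head), with analogous degree checks.

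The main obstacle I foresee is the case analysis itself, particularly for closed $J$ when some of $Q_1,Q_2,Q_3$ are empty (e.g., when $v_0$ coincides with $x$ or $w$, or when $J$ is merely a $2$-cycle of parallel edges). Choosing the orientation so that $v_0$ lies on the outer arc will be essential to guarantee that $J^{(1)}$ retains a vertex capable of serving as its distinguished vertex in the closed case.
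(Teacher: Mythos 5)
Your replacement of the paper's third and fourth chains by the single chain $J^{(2b)}=Q_2\cup\{e_z\}$ breaks the decomposition. An up chain with respect to $(H,\overline{H})$ requires \emph{every} vertex, ends included, to be either $r$ or of degree at least two in $\overline{H}$; you only verified that the ends $y$ and $v$ lie in $H'_{i+2}$ and that the internal vertices inherit the $\overline{H}$-degree. The new vertex $v$ fails the omitted condition: its only incident edges are $e_x,e_w$ (index $i$), $e_y$ (index $i+1$) and $e_z$ (in $J^{(2b)}$ itself at index $i+2$), so $v$ has degree zero in $\overline{H'_{i+2}}$, and $v\neq r$. Reclassifying $J^{(2b)}$ as a down chain does not help ($v$ would then have to lie in $\overline{H'_{i+2}}$, and the internal vertices of $Q_2$ need not have two earlier edges). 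The obstruction is structural: once $e_x,e_w$ are spent on the bypass chain at index $i$, that chain forces $e_y$ and $e_z$ to come later, and then $v$ can only appear in later chains in a role whose degree requirement points \emph{backwards} -- that is, as the tail of a one-way chain -- never as a vertex of an up chain.

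This is exactly why the paper keeps $e_z$ as its own one-way chain with tail $v$ and head $z$ (the tail needs only two earlier edges, supplied by $e_x,e_w$; the head $z$ has two later edges since it was internal to $J$), inserted before the chain $P_y\cap P_z$, which is then an up chain with ends $y$ and $z$ rather than $v$. Your $J^{(1)}$, your chain $e_y$, and your degenerate case $Q_2=\emptyset$ (where $e_z$ is indeed a one-way chain with head $y=z$) all agree with the paper's construction; the nonempty-$Q_2$ case is the one that must be repaired, and the repair is precisely to split $J^{(2b)}$ into the one-way chain $e_z$ followed by the up chain $Q_2$ with ends $y$ and $z$.
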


\begin{proof}
If $CI(e_1)=CI(e_2)$, then $J_1=J_2$. Without loss of generality, $e_1\in E(P_z)$ and $e_2\in E(P_y)$, so that the chain can be written as $J_1=J_2=P_xe_1(P_y\cap P_z)e_2P_w$ (where $P_y\cap P_z$ may have no edges if $y=z$). Recall that $e_1$ and $e_2$ are distinct, so $J_1=J_2$ is not a one-way chain.

By symmetry, we may assume $J_1=J_2$ is an up chain. In $G'$, we replace the chain $J_1=J_2$ with the following chains (in the listed order); see Figure~\ref{figClm1} for an illustration:

\begin{enumerate}
\item $P_xe_xe_wP_w$. This is an up chain. Since the edges $e_y$ and $e_z$ have not yet been used, the new vertex $v$ is incident to two edges in later chains.

\item $e_y$. This is a one-way chain with tail $v$ and head $y$. The tail $v$ is incident to two edges in earlier chains, namely $e_x$ and $e_w$. The head $y$ is incident to two edges in later chains since it was an internal vertex in the old up chain $J_1=J_2$.

\item $e_z$. This is a one-way chain with tail $v$ and head $z$. The tail $v$ is incident to two edges in earlier chains, namely $e_x$ and $e_w$. The head $z$ is incident to two edges in later chains since it was an internal vertex in the old up chain $J_1=J_2$.

\item $(P_y\cap P_z)$. Only add this chain if $P_y\cap P_z$ contains an edge. This is an up chain. The new ends $y,z$ are each incident to an edge in an earlier chain ($e_y$ and $e_z$, respectively) and are each incident to two edges in later chains since they were interior vertices of the old up chain $J_1=J_2$.
\end{enumerate}

\begin{figure}[h]
\centering
\includegraphics[scale=0.6]{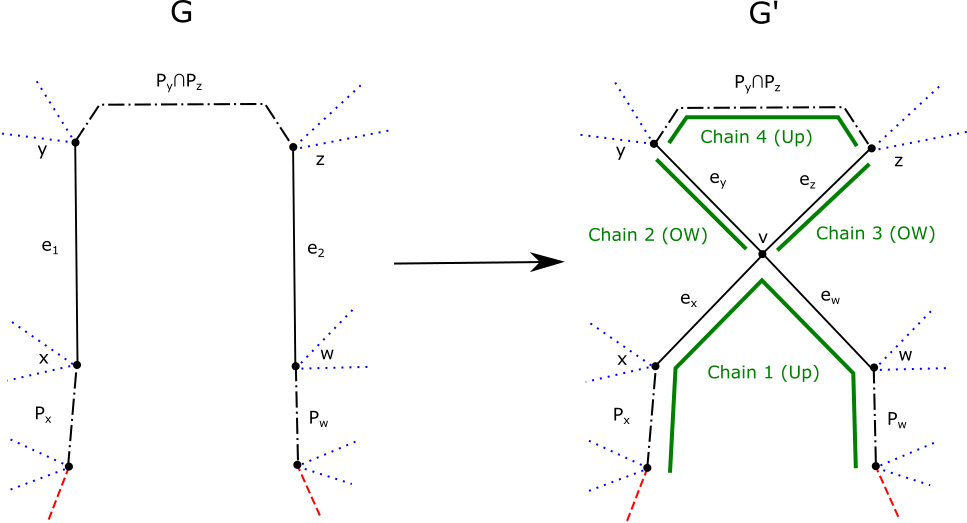}
\caption{An illustration of the procedure in Claim 1. The original up chain $J_1=J_2$ is on the left, while its replacements in $G'$ are on the right. The red/dashed edges are in earlier chains than $J_1=J_2$, while the blue/dotted edges are in later chains than $J_1=J_2$. The black/dashed-and-dotted segments represent paths which may have any length (including 0).}
\label{figClm1}
\end{figure}

We consider the impact of these replacements on other chains in the graph. We inserted most of the edges of the old chain $J_1=J_2$ at the same chain index $CI(e_1)=CI(e_2)$, preventing any changes. The exception is the pinched edges $e_1$ and $e_2$ which were deleted, but the ends each received new incident edges $e_x,e_y,e_z,e_w$ inserted at the same chain index $CI(e_1)=CI(e_2)$. Thus, we have maintained the chain decomposition. This proves Claim \ref{clm1}.
\end{proof}

Without loss of generality, we assume the following for the remainder of the proof:
\begin{itemize}
\item $CI(e_1)<CI(e_2)$.
\item If $J_1$ is a one-way chain, then $x$ is the tail and $y$ the head.
\item If $J_2$ is a one-way chain, then $z$ is the tail and $w$ the head.
\end{itemize}

%%%%%%%%%%%%%%%%%%%
%%%%% CLAIM 2 %%%%%
%%%%%%%%%%%%%%%%%%%

\begin{claim}\label{clm2}
Suppose that either $J_1$ is a one-way chain whose head $y$ has degree one in $H_{CI(e_2)}$, or $J_2$ is a one-way chain whose tail $z$ has degree one in $\overline{H_{CI(e_1)}}$. Then $G'$ has a chain decomposition rooted at $r$.
\end{claim}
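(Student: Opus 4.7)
By the symmetry observed in the remarks following the definition of a chain decomposition, the two disjuncts in the hypothesis correspond to one another under reversing the chain decomposition. It therefore suffices to treat the first disjunct: assume $J_1 = \{e_1\}$ is a one-way chain with tail $x$ and head $y$, and that $y$ has degree exactly one in $H_{CI(e_2)}$. Since this single $H_{CI(e_2)}$-edge must be $e_1$ itself, $y$ is incident to no edge in any chain $G_i$ with $i < CI(e_1)$ or $CI(e_1) < i < CI(e_2)$. As head of the one-way chain $J_1$, $y$ has at least two incident edges in $\overline{H_{CI(e_1)}}$, and all of these must therefore lie in chains of index $\geq CI(e_2)$.

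The plan is to build a new chain decomposition of $G'$ by modifying the old decomposition in two locations and leaving every other chain untouched. At position $CI(e_1)$, I would replace $J_1 = \{e_1\}$ by the one-way chain consisting of $e_x$ alone, keeping $x$ as the tail and declaring the new vertex $v$ the head; the head condition at $v$ holds because its other three incident edges $e_y, e_z, e_w$ all lie in later chains. At position $CI(e_2)$, I would replace $J_2 = P_z e_2 P_w$ with one or two new chains that together cover $e_z$ and $e_w$, and insert a new chain consisting of $e_y$ at an appropriate adjacent position. All chains $G_i$ with $CI(e_1) < i < CI(e_2)$ remain unchanged: the degrees of $x$ in every $H_j$ and $\overline{H_j}$ are preserved because $e_x$ replaces $e_1$ at the same index, the new vertex $v$ is not a vertex of any such chain, and although $y$'s degree in each such $H_j$ drops by one, $y$ is not a vertex of any of these chains by the observation above, so no chain condition is disturbed.

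The main obstacle is the second modification. The new vertex $v$ has only four incident edges, and every chain containing $v$ demands $v$ to have degree at least two in either $H$ or $\overline{H}$; in particular, of the four edges at $v$, only three can lie behind any chain containing the fourth. The placement of $e_y$ and the choice of chain type for the replacement of $J_2$ must therefore be coordinated, with sub-cases based on the type of $J_2$ (up chain, down chain, or one-way chain) and on the location of $y$ (for instance, whether $y = r$, whether $y \in V(J_2)$, or whether $y$ first reappears only after $J_2$). The essential flexibility in each sub-case comes from the sparse presence of $y$ in chains of index $< CI(e_2)$, which lets me freely reposition $e_y$ without upsetting earlier portions of the decomposition. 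A case-by-case verification, in the style of the proof of Claim~\ref{clm1}, completes the argument.
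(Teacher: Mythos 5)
Your setup coincides with the paper's: the symmetry reduction to the first disjunct, the observation that $y$ is incident to no chain strictly between $CI(e_1)$ and $CI(e_2)$, the replacement of $J_1$ by the one-way chain $e_x$ with tail $x$ and head $v$, and the bookkeeping showing that chains of intermediate index are untouched are all exactly the moves the paper makes. But the proof stops precisely where the actual content of the claim lies. You correctly identify the ``main obstacle'' --- that only three of $v$'s four edges can lie on one side of any chain containing the fourth, so the placement of $e_y$ must be coordinated with how $J_2$ is replaced --- and then you defer its resolution to ``a case-by-case verification,'' without exhibiting the replacement chains, their order, or the verification of the head/tail/end conditions. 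That verification is not routine padding; it is the proof. For instance, it requires the specific facts that if $J_2$ is a down chain then $y\notin V(J_2)$, and if $J_2$ is a one-way chain then $y\neq z$ (both because such vertices would need degree at least two in $H_{CI(e_2)}$, whereas $y$ has degree one there), and that if $J_2$ is an up chain then by minimality $y$ cannot be an internal vertex of $J_2$, so $y$ is at worst an end of an open $J_2$; one then swaps $z$ and $w$ if necessary and orders the new chains as $P_ze_z$, $e_y$, $e_w$, $P_w$ so that the end of $P_z$ does not rely on $e_y$ while the possible end $y$ of $P_w$ sees $e_y$ as an earlier chain. None of this appears in your writeup, and your description of the replacement (``one or two new chains that together cover $e_z$ and $e_w$'') does not even match what is actually needed in the up-chain case, where $J_2$'s edges are distributed over the three chains $P_ze_z$, $e_w$, $P_w$, interleaved with $e_y$.

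So the proposal is a correct outline of the same approach as the paper, but with a genuine gap: the explicit constructions for the three types of $J_2$ (up chain, down chain, one-way chain), the ordering of the inserted chains around $v$, and the checks that each new chain satisfies its degree conditions --- in particular that $y$, having lost $e_1$, still has two incident edges in chains later than the newly placed $e_y$ --- are asserted to exist rather than carried out. Without them, one cannot conclude that a valid chain decomposition of $G'$ results.
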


\begin{proof}
By symmetry, we may assume $J_1$ is a one-way chain whose head $y$ has degree one in $H_{CI(e_2)}$.

First, we replace $J_1$ with $e_x$. This is a one-way chain with tail $x$ and head $v$. The tail $x$ was the tail of the old one-way chain $J_1$. The head $v$ has two (in fact three) incident edges in later chains, namely $e_y$, $e_z$, $e_w$.

\begin{itemize}
\item Case 1: $J_2$ is an up chain. Since $y$ has degree one in $H_{CI(e_2)}$, if $J_2$ is closed then $y$ is not the end of $J_2$. By swapping $z$ and $w$ if necessary, we may assume that $y$ is not the end of $J_2$ in $P_z$. Thus, the end of $J_2$ in $P_z$ is still either $r$ or incident to an edge in an earlier chain, despite having not placed $e_y$ yet. We use the edges of $J_2$ and $e_y$, $e_z$, $e_w$ to construct chains at the index $CI(e_2)$ as follows:

\begin{enumerate}
\item $P_ze_z$. This is an up chain. The new end, $v$, has one incident edge in an earlier chain ($e_x$) and two incident edges in later chains ($e_y$, $e_w$). By assumption, the old end in $P_z$ is still either $r$ or incident to an edge in an earlier chain.

\item $e_y$. This is a one-way chain with tail $v$ and head $y$. The tail $v$ is incident two edges in earlier chains ($e_x$, $e_z$). The head $y$ is either $r$ or incident to two edges in later chains, since $y$ has degree one in $H_{CI(e_2)}$ by assumption.

\item $e_w$. This is a one-way chain with tail $v$ and head $w$. The tail $v$ has two (in fact three) incident edges in earlier chains ($e_x$, $e_y$, $e_z$). The head $w$ is either $r$ or incident to two edges in later chains, since it was part of the old up chain $J_2$.

\item $P_w$. Only add this if $P_w$ contains an edge. This is an up chain. The new end, $w$, has one incident edge in an earlier chain ($e_w$) and two incident edges in later chains since it was an internal vertex of the old up chain $J_2$. Since we placed $e_y$ above, the end of $J_2$ in $P_w$ has is either $r$ or incident to an end in an earlier chain, even if the end is $y$.
\end{enumerate}

\item Case 2: $J_2$ is a down chain. Since $y$ has degree one in $H_{CI(e_2)}$, $y\notin V(J_2)$, so each vertex of $J_2$ is still either $r$ or incident to two edges in earlier chains, despite having not placed $e_y$ yet. We use the edges of $J_2$ and $e_y$, $e_z$, $e_w$ to construct chains at the index $CI(e_2)$ as follows:

\begin{enumerate}
\item $P_w$. Only add this if $P_w$ contains an edge. This is a down chain. The new end, $w$, has one incident edge in a later chain ($e_w$) and two incident edges in earlier chains since it was an internal vertex of the old down chain $J_1$.

\item $e_w$. This is a one-way chain with tail $w$ and head $v$. The tail $w$ is either $r$ or incident to two edges in earlier chains since it was part of the old down chain $J_2$. The head $v$ is incident to two edges in later chains ($e_y$, $e_z$).

\item $P_ze_z$. This is a down chain. The new end, $v$, has one incident edge in a later chain ($e_y$) and two incident edges in earlier chains ($e_x$, $e_w$).

\item $e_y$. This is a one-way chain with tail $v$ and head $y$. The tail $v$ has two (in fact three) incident edges in earlier chains ($e_x$, $e_z$, $e_w$). The head $y$ is either $r$ or incident to two edges in later chains since $y$ has degree one in $H_{CI(e_2)}$ and $y\notin V(J_2)$ by assumption, so $y$ has degree at least three in $\overline{H_{CI(e_2)}}$ unless it is $r$.
\end{enumerate}

\item Case 3: $J_2$ is a one-way chain. Since $y$ has degree one in $H_{CI(e_2)}$, $y\neq z$ so the tail $z$ is still either $r$ or incident to two edges in earlier chains, despite having not placed $e_y$ yet. We use the edges $e_y$, $e_z$, $e_w$ to construct chains at the index $CI(e_2)$ as follows:

\begin{enumerate}
\item $e_z$. This is a one-way chain with tail $z$ and head $v$. The tail $z$ is either $r$ or incident to two edges in earlier chains as discussed above. The head $v$ is incident to two edges in later chains ($e_y$, $e_w$).

\item $e_w$. This is a one-way chain with tail $v$ and head $w$. The tail $v$ is incident to two edges in earlier chains ($e_x$, $e_z$). The head $w$ is either $r$ or incident to two edges in later chains since it was the head of $J_2$. 

\item $e_y$. This is a one-way chain with tail $v$ and head $y$. The tail $v$ has two (in fact three) incident edges in earlier chains ($e_x$, $e_z$, $e_w$). The head $y$ is either $r$ or incident to two edges in later chains since $y$ has degree one in $H_{CI(e_2)}$ and $y\notin V(J_2)$ by assumption, so $y$ has degree at least three in $\overline{H_{CI(e_2)}}$ unless it is $r$.
\end{enumerate}

\end{itemize}

We consider the impact of these replacements on other chains in the graph. As before, most of the edges of the old chains $J_1$ and $J_2$ were inserted at the same chain indices $CI(e_1)$ and $CI(e_2)$ respectively, preventing any changes. The pinched edges $e_1$ and $e_2$ were deleted, but the ends $x$, $z$, $w$ each received new incident edges $e_x$, $e_z$, $e_w$ inserted at the same chain indices ($CI(e_1)$, $CI(e_2)$, and $CI(e_2)$ respectively). However, $e_y$ was inserted at a different chain index than the deleted edge $e_1$ since $e_1$ was at $CI(e_1)$ while $e_y$ is at $CI(e_2)$. By the claim assumptions, $y$ has degree one in $H_{CI(e_2)}$, so there are no chains containing $y$ between $CI(e_1)$ and $CI(e_2)$, and so no chains were affected by the change. Thus, we have maintained the chain decomposition. This proves Claim \ref{clm2}.
\end{proof}

We may now assume the following for the remaining cases:
\begin{itemize}
\item If $J_1$ is a one-way chain, then $y$ has degree at least two in $H_{CI(e_2)}$.
\item If $J_2$ is a one-way chain, then $z$ has degree at least two in $\overline{H_{CI(e_1)}}$.
\end{itemize}

We also make the following conditional definitions, which will aid in distinguishing the remaining cases:

\begin{itemize}
\item If $J_1$ is a one-way chain and $y$ is not in $H_{CI(e_1)}$, then define the minimal index $i$ such that $y\in V(G_i)$ and $CI(e_1)<i<CI(e_2)$. Since $i$ is minimal, $y$ has degree one in $H_i$ (incident only to the pinched edge $e_1$). From this and the fact that $G_i$ is a minimal chain, it follows that either $y$ is one of two distinct ends of the up chain $G_i$, or $y$ is the head of the one-way chain $G_i$ which is not a loop.
\item If $J_2$ is a one-way chain and $z$ is not in $\overline{H_{CI(e_2)}}$, then define the maximal index $j$ such that $z\in V(G_j)$ and $CI(e_1)<j<CI(e_2)$. Since $j$ is maximal, $z$ has degree one in $\overline{H_j}$ (incident only to the pinched edge $e_2$). From this and the fact that $G_j$ is a minimal chain, it follows that either $z$ is one of two distinct ends of the down chain $G_j$, or $z$ is the tail of the one-way chain $G_j$ which is not a loop.
\end{itemize}

%%%%%%%%%%%%%%%%%%%
%%%%% CLAIM 3 %%%%%
%%%%%%%%%%%%%%%%%%%

\begin{claim}\label{clm3}
Suppose that either one of $i,j$ is not defined, or $i<j$. Then $G'$ has a chain decomposition rooted at $r$.
\end{claim}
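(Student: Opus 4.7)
The plan is to prove Claim~3 by case analysis on the chain types of $J_1$ and $J_2$, giving in each case an explicit rewrite of the chain decomposition that replaces the two deleted edges $e_1, e_2$ (and, when needed, entire chains $J_1$ or $J_2$) by a sequence of valid new chains built from $e_x, e_y, e_z, e_w$ together with $P_x, P_y, P_z, P_w$. The governing idea is that the four new edges split into two pairs: $\{e_x, e_y\}$ should be inserted in an index-neighborhood of $CI(e_1)$, possibly reaching out to index $i$ when $i$ is defined, while $\{e_z, e_w\}$ should be inserted near $CI(e_2)$, possibly reaching back to index $j$ when $j$ is defined. The hypothesis of Claim~3---either one of $i, j$ is undefined, or $i < j$---says exactly that these two neighborhoods lie in disjoint intervals of indices, so the two local rewrites do not interact and may be performed independently.

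For the $J_1$-side rewrite, we split into three cases. If $J_1$ is an up chain, write $J_1 = P_x e_1 P_y$ and substitute the two-edge path $e_x e_y$ for $e_1$; this produces an up chain at index $CI(e_1)$ with $v$ as a new internal vertex, and $v$ has degree at least two in $\overline{H_{CI(e_1)}}$ by way of $e_z, e_w$ at index $CI(e_2)$. If $J_1$ is a down chain, then $v$ has no edges in $H_{CI(e_1)}$ and so cannot be internal to a down chain there; instead, split $J_1$ at $e_1$ into the (possibly empty) down chains $P_x, P_y$ and insert $e_x, e_y$ as one-way chains with $v$ as the common head, in the order $P_x, e_x, e_y, P_y$. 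If $J_1$ is a one-way chain (tail $x$, head $y$), replace $J_1$ by the one-way chain $e_x$ with tail $x$ and head $v$, and insert $e_y$ either at $CI(e_1)$ as a one-way chain (when $i$ is undefined, i.e.\ when $y \in V(H_{CI(e_1)})$) or at index $i$ (when $i$ is defined) by modifying $G_i$---by the definition of $i$, the vertex $y$ is either an end of the up chain $G_i$ (in which case we extend $G_i$ by $e_y$) or the head of the one-way chain $G_i$ (in which case we insert $e_y$ as an adjacent one-way chain). A symmetric analysis handles the $J_2$-side, with $e_z, e_w$ playing the roles of $e_x, e_y$, the index $j$ the role of $i$, and up and down chains interchanged.

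The main obstacle is the proliferation of subcases and the careful verification that each rewrite yields valid chains: one must verify the \emph{end-in-$H$} condition and the \emph{internal-vertex degree at least two in $\overline{H}$} condition at each new chain, with special attention to the new vertex $v$ and to the old vertices $x, y, z, w$ whose incidences have changed. Symmetry (reversing the decomposition, which swaps up and down chains and swaps heads and tails) roughly halves the work, but one still has to deal with the closed (cycle) forms of $J_1$ and $J_2$ and with the degenerate possibilities that $P_x$, $P_y$, $P_z$, or $P_w$ is empty. Once the rewrite in each subcase is pinned down, the verification is routine degree-counting at the affected indices $CI(e_1)$, $CI(e_2)$, $i$, and $j$.
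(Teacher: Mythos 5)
Your overall plan is the same as the paper's: rewrite the $J_1$-side and the $J_2$-side independently, case by chain type, using the hypothesis that the affected index neighborhoods (around $CI(e_1)$ and $i$ versus around $j$ and $CI(e_2)$) are disjoint. However, two of your explicitly stated rewrites fail as written. First, in the case where $J_1$ is a down chain you insert in the order $P_x, e_x, e_y, P_y$. A down chain's ends must be $r$ or lie in \emph{later} chains, and the new end $y$ of $P_y$ has no guaranteed incident edge after $P_y$ in your order: its only new incident edge $e_y$ comes earlier, and as a vertex of the old down chain $J_1$ it is only guaranteed degree at least two in \emph{earlier} chains (e.g.\ a degree-$4$ vertex $y$ with two earlier edges and two edges of $J_1$ breaks your ordering). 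The paper's order $P_x, P_y, e_x, e_y$ places both path pieces before both one-way chains precisely to give $x$ and $y$ the later edges $e_x$, $e_y$.

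Second, in the one-way case with $i$ undefined (i.e.\ $y\in V(H_{CI(e_1)})$) you classify $e_y$ as a one-way chain at $CI(e_1)$, but neither orientation is valid in general: with tail $v$ the tail has only the single earlier edge $e_x$ (the other new edges $e_z, e_w$ sit near $CI(e_2)$ or $j$, which are later), and with tail $y$ the tail is only guaranteed \emph{one} earlier edge, since membership in $H_{CI(e_1)}$ gives degree at least one, not two (the Claim~2 assumption only controls degree in $H_{CI(e_2)}$). The correct classification, as in the paper, is an \emph{up chain} with ends $y$ and $v$: the up-chain end condition only requires the ends to lie in earlier chains (one edge suffices, namely $y$'s earlier edge and $v$'s edge $e_x$), while the degree-two-in-later-chains condition holds because $y$ was the head of $J_1$ and $v$ has $e_z, e_w$ later. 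Both slips are local and repairable exactly as the paper does; the remainder of your case analysis (up chain: $P_x e_x e_y P_y$; one-way with $i$ defined: $e_x$ plus attaching $e_y$ at $G_i$, either extending the up chain $G_i$ or inserting $e_y$ just after the one-way chain $G_i$; symmetric treatment of $J_2$) matches the paper, though a complete write-up must also verify, as the paper does, that relocating $e_y$ to index $i$ disturbs no chain strictly between $CI(e_1)$ and $i$, which follows from the minimality of $i$.
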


\begin{proof}
The chains replacing $J_1$ will have indices adjacent to $CI(e_1)$ and $i$ (if it is defined). Likewise, the chains replacing $J_2$ will have indices adjacent to $CI(e_2)$ and $j$ (if it is defined). Thus, by the assumptions of this claim, the chains replacing $J_1$ will have lower chain index than the chains replacing $J_2$. This fact will be needed when confirming that the new chains are valid. We begin by replacing $J_1$ as follows:

\begin{itemize}
\item Case 1: $J_1$ is an up chain. We replace it with $P_xe_xe_yP_y$. This is an up chain. The new vertex $v$ has two incident edges in later chains, namely $e_z$ and $e_w$.

\item Case 2: $J_1$ is a down chain. We replace it with the following chains (in the listed order):

\begin{enumerate}
\item $P_x$. Only add this chain if $P_x$ contains an edge. This is a down chain. The new end $x$ has an incident edge in a later chain, namely $e_x$.

\item $P_y$. Only add this chain if $P_y$ contains an edge. This is a down chain. The new end $y$ has an incident edge in a later chain, namely $e_y$.

\item $e_x$. This is a one-way chain with tail $x$ and head $v$. The tail $x$ is either $r$ or incident to two edges in earlier chains since it was in the old down chain $J_1$. The head $v$ has two incident edges in later chains, namely $e_z$ and $e_w$.

\item $e_y$. This is a one-way chain with tail $y$ and head $v$. The tail $y$ is either $r$ or incident to two edges in earlier chains since it was in the old down chain $J_1$. The head $v$ has two incident edges in later chains, namely $e_z$ and $e_w$.
\end{enumerate}

\item Case 3: $J_1$ is a one-way chain whose head $y$ is in $H_{CI(e_1)}$. We replace it with the following chains (in the listed order):

\begin{enumerate}
\item $e_x$. This is a one-way chain with tail $x$ and head $v$. The tail $x$ was the tail of the old one-way chain $J_1$. The head $v$ has two (in fact three) incident edges in later chains, namely $e_y, e_z, e_w$.

\item $e_y$. This is an up chain. The vertex $y$ is either $r$ or incident to two edges in later chains since it was the head of the old one-way chain $J_1$, and it has an incident edge in an earlier chain by assumption. The vertex $v$ has two incident edges in later chains, namely $e_z$ and $e_w$, and is incident to $e_x$ from the previous chain.
\end{enumerate}

\item Case 4: $J_1$ is a one-way chain whose head $y$ is not in $H_{CI(e_1)}$. Then $i$ is defined as above.

First, we replace $J_1$ with $e_x$. This is a one-way chain with tail $x$ and head $v$. The tail $x$ was the tail of the old one-way chain $J_1$. The head $v$ has two (in fact three) incident edges in later chains, namely $e_y, e_z, e_w$.

\begin{itemize}
\item Subcase 1: $y$ is one of two distinct ends of the up chain $G_i$. Replace $G_i$ with $G_ie_y$. This is an up chain. Since $G_i$ was a path and $v$ is a new vertex, this new chain is a path. The new end $v$ is adjacent to one edge in an earlier chain ($e_x$) and two edges in later chains ($e_z$ and $e_w$).

\item Subcase 2: $y$ is the head of the one-way chain $G_i$ which is not a loop. Then $y$ is not required to be in $H_i$ for $G_i$ to be a valid chain. In fact, $y$ is not required to be in any of $H_0,H_1,\ldots,H_i$ by the definition of $i$ and the assumptions of this case. Thus, we can leave $G_i$ as is and insert the chain $e_y$ immediately after $G_i$. This is an up chain. The vertex $y$ is incident to an edge in the previous chain $G_i$, and is either $r$ or incident to two edges in later chains since it is the head of $G_i$. The vertex $v$ is adjacent to one edge in an earlier chain ($e_x$) and two edges in later chains ($e_z$ and $e_w$).
\end{itemize}

\end{itemize}

The procedure for replacing $J_2$ is symmetric, by following the above steps in the reversed chain decomposition.

We consider the impact of these replacements on other chains in the graph. In most cases, we replaced the old chain $J_1$ with new chains inserted at the same chain index $CI(e_1)$, preventing any changes. The pinched edge $e_1$ was deleted, but the end $x$ received a new incident edge $e_x$ at the same chain index $CI(e_1)$. In Cases 1-3, the same is true for $y$. In Case 4, $y$ received a new incident edge $e_y$ either at or immediately after the chain index $i$. However, by the definition of $i$ and the claim assumptions, no chains were affected by the new chain index except $G_i$, which was specifically considered and shown to be valid in Case 4. By similar arguments, the changes caused by replacing $J_2$ also did not invalidate any chains. Thus, we have maintained the chain decomposition. This proves Claim \ref{clm3}.
\end{proof}

%%%%%%%%%%%%%%%%%%%
%%%%% CLAIM 4 %%%%%
%%%%%%%%%%%%%%%%%%%

\begin{claim}\label{clm4}
Suppose that both of $i,j$ are defined and $i=j$. Then $G'$ has a chain decomposition rooted at $r$. 
\end{claim}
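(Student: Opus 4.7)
My plan is to first pin down the type of $G_i = G_j$, then give an explicit replacement consisting of five one-way chains, and finally verify validity of the new and unchanged chains.

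The compatibility step is the heart of the argument. From the conditional definitions preceding Claim~\ref{clm3}, the chain $G_i$ is either an open up chain with $y$ as one of two distinct ends or a non-loop one-way chain with $y$ as head, while $G_j$ is either an open down chain with $z$ as one of two distinct ends or a non-loop one-way chain with $z$ as tail. Since any single chain has exactly one of the three types (up, down, or one-way) and $G_i = G_j$, the only compatible possibility is that $G_i = G_j$ is a one-way chain consisting of a single edge $f$ with tail $z$ and head $y$.

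Given this structure, I would construct the new chain decomposition of $G'$ by replacing the three affected chains with the following one-way chains, listed in their new chain-decomposition order: $e_x$ with tail $x$ and head $v$ at the position of $J_1$; then at the position of $G_i$, three consecutive chains $e_z$ (tail $z$, head $v$), then $f$ (tail $z$, head $y$), then $e_y$ (tail $v$, head $y$); and finally $e_w$ with tail $v$ and head $w$ at the (now shifted) position of $J_2$. All other chains are left in place, shifted by two indices past the insertion point at $G_i$.

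The verification is a routine degree count. Each of $x$, $y$, $z$, $w$ inherits the tail/head role (and thus the ``degree $\geq 2$'' condition in $H$ or $\overline{H}$) that it already had in the original decomposition. The new vertex $v$ accumulates $H$-degree in the correct order: it has $e_x$ in $H$ well before $e_y$ requires $v$ as a tail (with $e_z$ placed in between for extra slack), and all three of $e_x, e_y, e_z$ lie in $H$ before the final chain $e_w$ needs $v$ as a tail; symmetrically, $v$ has two or three later edges on hand every time it serves as a head. The unchanged old chains remain valid because chains strictly between $J_1$ and $G_i$ (resp.\ between $G_i$ and $J_2$) contain neither $y$ nor $z$ by the minimality of $i$ (resp.\ maximality of $j = i$), so the only effect on them is the addition of edges at the new vertex $v$ together with the substitution of $e_1, e_2$ by equivalent incident edges at $x, y, z, w$, which only weakly increases the relevant $H$- or $\overline{H}$-degrees. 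The main obstacle is the first step — recognizing that the joint constraints on $i$ and $j$ force $G_i = G_j$ to be a single-edge one-way chain of a specified orientation; once that is noticed, the explicit replacement is essentially forced by the bookkeeping at $v$ and the rest of the verification is mechanical.
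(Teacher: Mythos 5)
Your proof is correct and takes essentially the same approach as the paper: you recognize that the joint constraints force $G_i=G_j$ to be a (non-loop) one-way chain with tail $z$ and head $y$, leave that chain in place, and surround it with the four new one-way chains $e_x,e_z,e_y,e_w$ ordered so that $v$ always has two incident edges on the required side, using the minimality of $i$ and maximality of $j$ to check that no other chain is invalidated. The only cosmetic difference is that you keep $e_x$ and $e_w$ at the old indices $CI(e_1)$ and $CI(e_2)$ (as in the paper's Claim 5) rather than placing all four new chains adjacent to index $i=j$, which works equally well.
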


\begin{proof}
Recall that $G_i$ is either an up chain or a one-way chain with head $y$, and $G_j$ is either a down chain or a one-way chain with tail $z$. Since $i=j$, we conclude that $G_i=G_j$ must be a one-way chain with tail $z$ and head $y$, and $y\neq z$ since $i$ and $j$ are defined. We can replace $J_1$ and $J_2$ with the following chains, in the listed order. The first two will be placed immediately before index $i=j$, and the last two immediately after index $i=j$; see Figure~\ref{figClm4} for an illustration:

\begin{enumerate}
\item $e_x$. This is a one-way chain with tail $x$ and head $v$. The tail $x$ was the tail of the old one-way chain $J_1$ and we are placing this chain after index $CI(e_1)$. The head $v$ has two (in fact three) incident edges in later chains, namely $e_y, e_z, e_w$.

\item $e_z$. This is a one-way chain with tail $z$ and head $v$. By the definition of $j$, the tail $z$ is either $r$ or incident to two edges in earlier chains than $G_j$, and we are placing this chain immediately before index $j$. The head $v$ has two incident edges in later chains, namely $e_y$ and $e_w$.

\item $e_y$. This is a one-way chain with tail $v$ and head $y$. The tail $v$ has two incident edges in earlier chains, namely $e_x$ and $e_z$. By the definition of $i$, the head $y$ is either $r$ or incident to two edges in later chains than $G_i$, and we are placing this chain immediately after index $i$.

\item $e_w$. This is a one-way chain with tail $v$ and head $w$. The tail $v$ has two (in fact three) incident edges in earlier chains, namely $e_x, e_y, e_z$. The head $w$ was the head of the old one-way chain $J_2$, and we are placing this chain before $CI(e_2)$.
\end{enumerate}

\begin{figure}[h]
\centering
\includegraphics[scale=0.6]{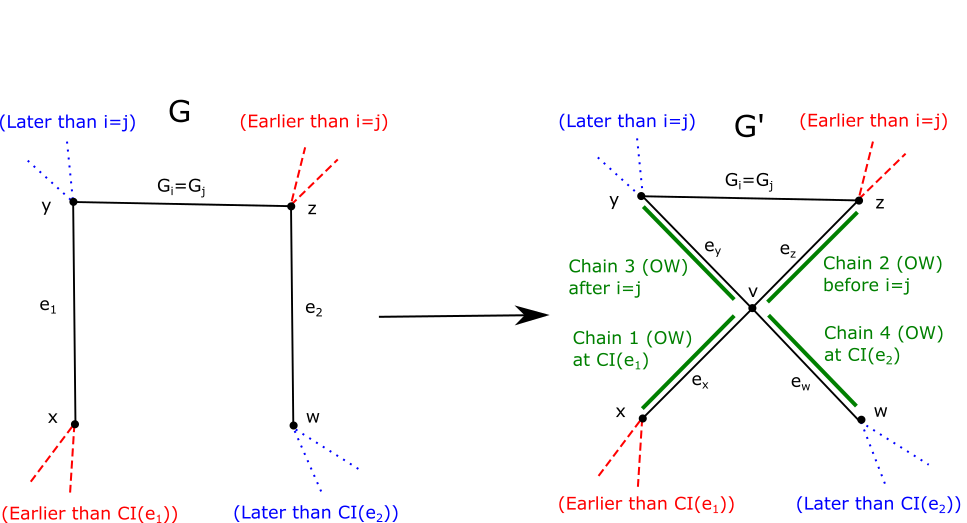}
\caption{An illustration of the procedure in Claim 4. The original chains $J_1$ and $J_2$ are on the left, while their replacements in $G'$ are on the right. The red/dashed edges are in earlier chains, while the blue/dotted edges are in later chains, with the particular meanings of ``earlier" and ``later" in the corresponding labels.}
\label{figClm4}
\end{figure}

We consider the impact of these replacements on other chains in the graph. The deleted edge $e_1$ was replaced by two edges with chain index greater than $CI(e_1)$, so we must be careful. The edge $e_x$ was inserted before index $i$, but $x$ had degree at least two in $H_{CI(e_1)}$, so losing a degree in later $H$ subgraphs will not invalidate any chains. The edge $e_y$ was inserted immediately after index $i$, so by the definition of $i$, the only chain affected is $G_i$. Since $G_i$ has $y$ as a head, losing a degree in $H_i$ will not invalidate the chain. By a symmetric argument, the changes caused by $e_z$ and $e_w$ do not invalidate any chains.
This proves Claim \ref{clm4}.
\end{proof}

%%%%%%%%%%%%%%%%%%%
%%%%% CLAIM 5 %%%%%
%%%%%%%%%%%%%%%%%%%

\begin{claim}\label{clm5}
Suppose that both of $i,j$ are defined, and $i>j$. Then $G'$ has a chain decomposition rooted at $r$.
\end{claim}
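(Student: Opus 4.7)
The plan is to place the four new edges as one-way chains at carefully chosen positions: replace $J_1=\{e_1\}$ with $e_x$ (tail $x$, head $v$) at index $CI(e_1)$; replace $J_2=\{e_2\}$ with $e_w$ (tail $v$, head $w$) at index $CI(e_2)$; insert $e_z$ (tail $z$, head $v$) as a one-way chain immediately after $G_j$; and insert $e_y$ (tail $v$, head $y$) as a one-way chain immediately before $G_i$. The hypothesis $j<i$ is exactly what allows $e_z$ to sit strictly earlier than $e_y$ in the new chain order, giving the new vertex $v$ the required earlier/later incident edges at every one-way chain condition.

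The first verification step is that each of the four new one-way chains is valid at its position. The chains $e_x$ and $e_w$ are straightforward: $x$ and $w$ retain their original tail/head status from $J_1$ and $J_2$, and $v$ has three incident edges on the appropriate side. For $e_z$, the tail $z$ has degree at least two in $H_j$ because $G_j$ is either a down chain (every vertex has degree at least two in $H_j$) or a one-way chain with tail $z$; the head $v$ has two later incident edges $e_y$ and $e_w$. For $e_y$, the tail $v$ has two earlier incident edges $e_x$ and $e_z$ (using $j<i$), and the head $y$ has at least two later incident edges coming from $G_i$ together with the edges of $\overline{H_i}$ incident to $y$, of which there are at least two by the classification of $G_i$.

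The key remaining step is to show that the old chains stay valid. The delicate cases are $G_j$ and $G_i$. In the new decomposition, $\overline{H_j}$ loses $e_2$ but gains $e_z$, both incident to $z$, so $z$ still satisfies the end condition of the down chain or the tail condition of the one-way chain at $G_j$. Symmetrically, the $H$-side of $G_i$ loses $e_1$ but gains $e_y$, both incident to $y$, preserving the end/head condition of $G_i$ at $y$. Every other chain is unaffected, because each removed edge ($e_1$ or $e_2$) is compensated by an added edge ($e_x$, $e_y$, $e_z$, or $e_w$) incident to the same vertex on the same side. Here the definitions of $i$ (minimal) and $j$ (maximal) are essential: they guarantee that $y\notin V(G_k)$ for $CI(e_1)<k<i$ and $z\notin V(G_k)$ for $j<k<CI(e_2)$, so no intermediate chain is broken by the degree-changes at $y$ or $z$.

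The main obstacle, compared with Claim~\ref{clm4}, is that here $G_j$ and $G_i$ are distinct chains sitting apart in the decomposition, so one cannot route a single bridge through a common middle chain; two separate insertions are needed, each placed close to its critical chain so as to preserve the end or head condition threatened by the deletion of $e_1$ or $e_2$. The ordering $j<i$ is precisely what makes this simultaneous placement consistent with $v$'s earlier/later degree requirements.
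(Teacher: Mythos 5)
Your construction is exactly the one in the paper: $e_x$ at $CI(e_1)$, $e_z$ immediately after $G_j$, $e_y$ immediately before $G_i$, $e_w$ at $CI(e_2)$, with the same verification that $j<i$ orders $v$'s incident edges correctly and that the minimality of $i$ and maximality of $j$ protect the intermediate chains while $G_i$ and $G_j$ keep $y$ and $z$ on the correct side via the swapped edges. The argument is correct as written (modulo stating the ``or is $r$'' alternative in the tail/head conditions, which the paper also carries along), so no changes are needed.
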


\begin{proof}
We can replace $J_1$ and $J_2$ with the following chains, at the indicated chain indices; see Figure~\ref{figClm5} for an illustration:

\begin{enumerate}
\item $e_x$. Add this chain at index $CI(e_1)$. This is a one-way chain with tail $x$ and head $v$. The tail $x$ was the tail of the old one-way chain $J_1$ and we are placing this chain at index $CI(e_1)$. The head $v$ has two (in fact three) incident edges in later chains, namely $e_y, e_z, e_w$.

\item $e_z$. Add this chain immediately after $G_j$. This is a one-way chain with tail $z$ and head $v$. By the definition of $j$, the tail $z$ is either $r$ or incident to two edges in earlier chains than $G_j$, and we are placing this chain after index $j$. The head $v$ has two incident edges in later chains, namely $e_y$ and $e_w$.

\item $e_y$. Add this chain immediately before $G_i$. This is a one-way chain with tail $v$ and head $y$. The tail $v$ has two incident edges in earlier chains, namely $e_x$ and $e_z$. By the definition of $i$, the head $y$ is either $r$ or incident to two edges in later chains than $G_i$, and we are placing this chain before index $i$.

\item $e_w$. Add this chain at index $CI(e_2)$. This is a one-way chain with tail $v$ and head $w$. The tail $v$ has two (in fact three) incident edges in earlier chains, namely $e_x, e_y, e_z$. The head $w$ was the head of the old one-way chain $J_2$, and we are placing this chain at index $CI(e_2)$.
\end{enumerate}

\begin{figure}[h]
\centering
\includegraphics[scale=0.6]{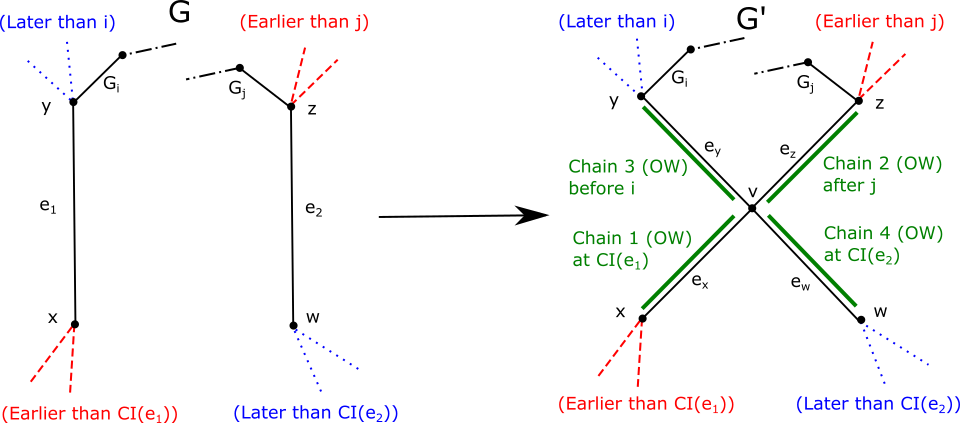}
\caption{An illustration of the procedure in Claim 5. The original chains $J_1$ and $J_2$ are on the left, while their replacements in $G'$ are on the right. The red/dashed edges are in earlier chains, while the blue/dotted edges are in later chains, with the particular meanings of ``earlier" and ``later" in the corresponding labels. The black/dashed-and-dotted segments represent paths which may have any length (including 0).}
\label{figClm5}
\end{figure}

We consider the impact of these replacements on other chains in the graph. The edge $e_1$ was deleted, but $x$ received a new incident edge $e_x$ at the same chain index $CI(e_1)$. The edge $e_y$ was inserted before index $i$, but the index is still smaller than $i$, so by the definition of $i$, no chains are affected. By a symmetric argument, the changes caused by $e_z$ and $e_w$ also do not invalidate any chains. This proves Claim \ref{clm5}.
\end{proof}

The claims cover all possibilities of pinching edges. The proof of Theorem~\ref{thmChains} is complete. The proof also implies a polynomial-time algorithm to construct a chain decomposition.\qed

\section{Proof of Theorem~\ref{thmTreesFromChains}}

Assume that we have a chain decomposition $G_0,G_1,\ldots,G_m$ of $G$. By Remark \ref{rmkMin}, we may assume that the chain decomposition is minimal. We will adapt the strategy of Curran, Lee, and Yu~\cite{cly} to prove Theorem~\ref{thmTreesFromChains}. In particular, we will construct two partial numberings of the edges of $G$ using the chain decomposition. We will then construct four spanning trees in two pairs, with one pair associated with each numbering. Within each pair, paths back to the root $r$ will be monotonic in the associated numbering to ensure independence. Between pairs, paths back to the root $r$ will be monotonic in chain index to ensure independence.

Using Corollary \ref{corNoLoops}, we may assume that there are no loops in $G$. By Lemma \ref{lmaDeg2}, for each vertex $v\neq r$, there are two distinct non-loop edges incident to $v$ whose chain indices are strictly smaller than the chain index of any other edge incident to $v$. Likewise there are two distinct edges whose chain indices are strictly larger than the chain index of any other edge adjacent to $v$. We will name these edges as follows:

\begin{definition*}
For each vertex $v\neq r$, the two \textit{$f$-edges} of $v$ are the two incident edges with the lowest chain index. Similarly, the two \textit{$g$-edges} of $v$ are the two incident edges with the highest chain index.
\end{definition*}

\begin{remark}\label{rmkfDown}
By the definition of a down chain, the edges of down chains are never $f$-edges. Likewise, by the definition of an up chain, the edges of up chains are never $g$-edges.
\end{remark}

Next, we will iteratively define a numbering $f$, which will assign distinct values in $\mathbb{R}$ to all edges in up chains and one-way chains. Here, two ``consecutive'' edges in a chain will refer to two edges in the chain which are incident to an internal vertex of the chain, so the two edges incident to the end of a closed chain are not consecutive, despite being adjacent.

We begin by numbering the edges in $E(G_0)$, and then number the edges of each up chain and one-way chain in order of chain index. When we reach a chain $G_i$, we may assume that all edges in $E(H_i)$ belonging to up chains and one-way chains have been numbered, which includes all $f$-edges in $E(H_i)$ by Remark \ref{rmkfDown}. We use the following procedure to number the edges in $E(G_i)$:

\begin{itemize}
\item If $G_i$ is a closed up chain containing $r$, then number the edges in $E(G_i)$ so that the values change monotonically between consecutive edges in the chain. The particular numbers used are arbitrary.

\item If $G_i$ is a closed up chain not containing $r$, then both $f$-edges of the common end have already been numbered. Call these two $f$-edges \textit{numbering edges} of $G_i$. Say the numbering edges of $G_i$ have $f$-values $a$ and $b$. Number the edges in $E(G_i)$ so that the values change monotonically between consecutive edges in the chain, and all values are between $a$ and $b$.

\item If $G_i$ is an open up chain containing $r$, then $r$ is an end and the other end is some $u\neq r$. At least one $f$-edge of $u$ has already been numbered. Choose an $f$-edge which has already been numbered and call it a \textit{numbering edge} of $G_i$. Say that $a$ is the $f$-value of the numbering edge. Number the edges in $E(G_i)$ so that the values increase between consecutive edges in the chain when moving from $u$ to $r$, and all values are larger than $a$.

\item If $G_i$ is an open up chain not containing $r$, then at least one $f$-edge of each end has been numbered. If the ends are $u$ and $v$, we can choose two distinct edges $e_u, e_v\in E(H_i)$ so that $e_u$ is an $f$-edge of $u$ and $e_v$ is an $f$-edge of $v$. We can choose these two distinct edges because otherwise, the only $f$-edge of $u$ or $v$ in $E(H_i)$ would be a single edge between $u$ and $v$, and then $H_i$ would not be connected. Call the edges $e_u,e_v$ \textit{numbering edges} of $G_i$. Without loss of generality, $f(e_u)=a<b=f(e_v)$. Number the edges in $E(G_i)$ so that the values increase between consecutive edges in the chain when moving from $u$ to $v$, and all values are between $a$ and $b$.

\item If $G_i$ is a one-way chain whose tail is $r$, then number the edge of $G_i$ arbitrarily.

\item If $G_i$ is a one-way chain whose tail is not $r$, then both $f$-edges of the tail are already numbered, say with $f$-values $a$ and $b$. Number the edge of $G_i$ between $a$ and $b$.
\end{itemize}

We symmetrically define a numbering $g$, which assigns distinct values in $\mathbb{R}$ to the edges of down chains and one-way chains, by using the above procedure in the reversed chain decomposition.

We are finally ready to construct the trees. Define the subgraphs $T_1,T_2,T_3,T_4$ as follows. For each $v\neq r$, consider the two $f$-edges of $v$. Assign the edge with the lower $f$-value to $T_1$ and the edge with the higher $f$-value to $T_2$. Similarly, consider the two $g$-edges of $v$. Assign the edge with the lower $g$-value to $T_3$ and the edge with the higher $g$-value to $T_4$.

Several properties of $T_1,T_2,T_3,T_4$ will follow from the following claim.

\begin{claim*}
For any $v\neq r$, consider the edge $e_1$ assigned to $T_1$ at $v$. Let $v'$ be the other end of $e_1$. If $v'\neq r$, let $e'_1$ be the edge assigned to $T_1$ at $v'$. Then $CI(e'_1)\leq CI(e_1)$ and $f(e'_1)<f(e_1)$.
\end{claim*}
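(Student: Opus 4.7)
The plan is to proceed by cases on the chain $G_i$ containing $e_1$, where I set $i := CI(e_1)$. Since $e_1$ is an $f$-edge of $v$, Remark~\ref{rmkfDown} rules out that $G_i$ is a down chain, so $G_i$ is either a one-way chain or an up chain. In each case, the hypothesis that $e_1$ is the $T_1$-edge at $v$ (i.e.\ the $f$-edge of $v$ with the smaller $f$-value) will pin $v$ down to a specific position in $G_i$, and then the numbering procedure together with its monotonicity property will produce both inequalities.

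The one-way chain case is the easier of the two. If $G_i$ consists of the single edge $e_1$ with tail $t$ and head $h$, then $v \neq r$ forces $v = h$ and $v' = t$: if instead $v = t$, then $v$ would have at least two edges in $H_i$ with chain index less than $i$, contradicting $e_1$ being an $f$-edge of $v$. Assuming $v' = t \neq r$, we have that $v'$ has degree at least two in $H_i$, so both $f$-edges of $v'$ live in earlier chains; in particular $CI(e'_1) < i$. By the numbering rule for one-way chains, $f(e_1)$ is placed strictly between the $f$-values of the two $f$-edges of $t$, so $e'_1$, which takes the smaller of the two, satisfies $f(e'_1) < f(e_1)$.

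The up chain case splits further on the position of $v$ in $G_i$. When $v$ is internal, the minimality of the chain decomposition gives $v \notin \{r\} \cup V(H_i)$, so the two $f$-edges of $v$ are exactly the two chain-edges at $v$ in $G_i$. Selecting $e_1$ as the one with the smaller $f$-value fixes the direction of decrease of $f$ along $G_i$ through $v$; following this direction through $v'$ gives $e'_1$ and the desired inequalities, either via monotonicity (when $v'$ is also internal, yielding $CI(e'_1) = i$ and $f(e'_1) < f(e_1)$) or via an $H_i$-numbering edge at $v'$ with $f$-value at most the lower bracketing value $a$ of $f$ on $G_i$ (when $v'$ is an end of $G_i$ with $v' \neq r$). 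It remains to check what happens when $v$ itself is an end of $G_i$; a short case-check over closed/open and with/without $r$ chains shows that in almost every configuration the numbering edge at $v$ in $H_i$ strictly beats $e_1$ in $f$-value and would itself be the $T_1$-edge, contradicting the hypothesis. The only surviving configuration is when $G_i$ is an open up chain not containing $r$ and $v$ is its ``high-$f$'' end, with exactly one edge in $H_i$; in that configuration the analysis for $v'$ reduces to the monotonicity or opposite-end arguments above.

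The main obstacle is the case-check for up chains: one has to verify systematically that the requirement that $e_1$ is the $T_1$-edge at $v$ forces $v$ into one of the positions identified above, and that the numbering procedure has indeed placed $f(e_1)$ strictly inside an interval bounded by the already-numbered $f$-values at $v'$. Once that structural check is done, both $CI(e'_1) \leq CI(e_1)$ and $f(e'_1) < f(e_1)$ follow uniformly from the construction of $f$: the chain-index inequality because any edge of $G_i$ at $v'$ has chain index $i$ while any $H_i$-edge has chain index less than $i$, and the strict $f$-value inequality because all $f$-values on $G_i$ lie strictly inside the open interval determined by the bracketing numbering edges.
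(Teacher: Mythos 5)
Your proposal is correct and follows essentially the same route as the paper: down chains are excluded by Remark~\ref{rmkfDown}, the one-way chain case is handled identically, and the up-chain case rests on minimality plus the monotonicity/bracketing built into the numbering procedure. The only difference is cosmetic: where you run an explicit case-check on whether $v$ (and $v'$) is internal or an end of the up chain, the paper argues uniformly by observing that the other $f$-edge $e_2$ of $v$ is either a chain edge or the numbering edge at $v$, so that $f(e_1)$ is sandwiched between $f(e_2)$ and the $f$-value of an $f$-edge of $v'$, and $f(e_1)<f(e_2)$ finishes the argument.
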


\begin{proof}
Let $e_2$ be the edge assigned to $T_2$ at $v$. The edge $e_1$ is not in a down chain by Remark \ref{rmkfDown}. We break into two cases.

\begin{itemize}
\item Suppose $e_1$ is in an up chain $G_i$. Since the chain decomposition is minimal and $v'\in V(G_i)$, its $f$-edges are either in $E(G_i)$, or else have chain index less than $i$. In either case, $CI(e'_1)\leq i=CI(e_1)$ as desired.

Note that $e_2$ is either in $E(G_i)$, or else is the numbering edge of $G_i$ at the end $v$. By the numbering procedure, we know that $f(e_1)$ is between $f(e_2)$ and the $f$-value of one of the $f$-edges of $v'$, say $e^*$. By the definition of $T_1$, $f(e_1)<f(e_2)$, so it follows that $f(e^*)<f(e_1)$. Again by the definition of $T_1$, $f(e'_1)\leq f(e^*)$, so $f(e'_1)<f(e_1)$ as desired.

\item Suppose $e_1$ induces a one-way chain $G_i$. Since $e_1$ is an $f$-edge, $v$ has degree at most one in $H_i$, so $v$ must be the head of $G_i$. Then $v'$ is the tail of $G_i$, so the $f$-edges of $v'$ have chain indices smaller than $i$, which means $e'_1\neq e_1$ and $CI(e'_1)<CI(e_1)$ as desired.

From the numbering procedure, we know that $f(e_1)$ is between the $f$-values of the two $f$-edges of $v'$, with $f(e'_1)$ being the smaller by the definition of $T_1$. So, $f(e'_1)<f(e_1)$ as desired.
\end{itemize}

In both cases we have $CI(e'_1)\leq CI(e_1)$ and $f(e'_1)<f(e_1)$. This proves the claim.
\end{proof}

With the claim proven, it follows that the edges assigned to $T_1$ are all distinct, there are no cycles in $T_1$, and following consecutive edges assigned to $T_1$ produces a path which is decreasing in chain index, strictly decreasing in $f$-value, and can only end at $r$. Thus, $T_1$ is connected and is a spanning tree of $G$. A similar argument shows that $T_2$ is a spanning tree of $G$ where paths to $r$ are decreasing in chain index and strictly increasing in $f$-value. Due to the opposite trends in $f$-values, $T_1$ and $T_2$ are edge-independent with root $r$.

By symmetry, we obtain analogous results for $T_3$ and $T_4$. It remains to show that a tree from $\{T_1,T_2\}$ and a tree from $\{T_3,T_4\}$ are edge-independent. The paths back to $r$ from a vertex $v\neq r$ are decreasing in chain index in one tree and increasing in chain index in the other tree, but not strictly. The first edges in these paths are an $f$-edge and a $g$-edge of $v$, respectively. By Lemmas \ref{lmaDeg2} and \ref{lmaMinDeg4}, there is a positive difference in chain index between these initial edges, so the paths are in fact edge-disjoint. The proof of Theorem \ref{thmTreesFromChains} is complete. The proof also implies a polynomial-time algorithm to construct the edge-independent spanning trees.\qed

\section{Summary of Results}

With Theorems \ref{thmChains} and \ref{thmTreesFromChains} proven, we obtain Theorem \ref{thmTrees}. In fact, we can examine the argument more carefully to extract a stronger, summarizing result.

\begin{corollary}\label{corSummary}
Suppose $G$ is a graph with no isolated vertices and $V(G)\geq2$. Then the following statements are equivalent.
\begin{enumerate}
\item $G$ is $4$-edge-connected.
\item There exists $r\in V(G)$ so that $G$ has a chain decomposition rooted at $r$.
\item For all $r\in V(G)$, $G$ has a chain decomposition rooted at $r$.
\item There exists $r\in V(G)$ so that $G$ has four edge-independent spanning trees rooted at $r$.
\item For all $r\in V(G)$, $G$ has four edge-independent spanning trees rooted at $r$.
\end{enumerate}
\end{corollary}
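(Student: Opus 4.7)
The plan is to close the cycle $(1) \Rightarrow (3) \Rightarrow (2) \Rightarrow (4) \Rightarrow (1)$, which establishes the equivalence of statements $(1)$--$(4)$, and then attach $(5)$ via the side implications $(3) \Rightarrow (5) \Rightarrow (4)$. Nearly every step is either trivial or an immediate restatement of one of Theorems~\ref{thmChains} and~\ref{thmTreesFromChains}; the only implication that needs an independent argument is $(4) \Rightarrow (1)$.

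Specifically, $(1) \Rightarrow (3)$ is precisely Theorem~\ref{thmChains}. The implications $(3) \Rightarrow (2)$ and $(5) \Rightarrow (4)$ are trivial, each being the specialization of a ``for all $r$'' statement to a specific $r$. The implication $(2) \Rightarrow (4)$ is Theorem~\ref{thmTreesFromChains} applied at the root $r$ provided by $(2)$, which is legal because the corollary hypothesizes that $G$ has no isolated vertices. Similarly, $(3) \Rightarrow (5)$ follows by invoking Theorem~\ref{thmTreesFromChains} once for each $r \in V(G)$.

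For $(4) \Rightarrow (1)$ I would run a standard edge-Menger argument. Assume $G$ has four edge-independent spanning trees $T_1, T_2, T_3, T_4$ rooted at some $r \in V(G)$, and suppose for contradiction that $F \subseteq E(G)$ is a set of at most $3$ edges whose removal disconnects $G$. Since each $T_i$ is spanning, $G$ itself is connected, so $G - F$ has a component $B$ not containing $r$; pick any $v \in B$. For each $i$, let $P_i$ be the unique $v$-$r$ path in $T_i$; by edge-independence at $r$, the paths $P_1, P_2, P_3, P_4$ are pairwise edge-disjoint. Each $P_i$ has its endpoints in different components of $G - F$, so it must use at least one edge of $F$; hence the four paths use four pairwise distinct edges of $F$, contradicting $|F| \leq 3$. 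Therefore $G$ is $4$-edge-connected.

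There is essentially no obstacle beyond the two main theorems: once Theorems~\ref{thmChains} and~\ref{thmTreesFromChains} are in hand, the corollary is bookkeeping, with the Menger-style argument above being the only point that reaches outside the paper. The hypotheses ``no isolated vertices'' and ``$|V(G)| \geq 2$'' are used, respectively, to legitimately invoke Theorem~\ref{thmTreesFromChains} and to ensure that $4$-edge-connectivity is a meaningful, nonvacuous condition, so that an edge cut separating $r$ from some other vertex is a genuine possibility in the contradiction step for $(4) \Rightarrow (1)$.
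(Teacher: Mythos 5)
Your proposal is correct and follows essentially the same route as the paper: the implications $(1)\Rightarrow(3)$, $(2)\Rightarrow(4)$, $(3)\Rightarrow(5)$ come straight from Theorems~\ref{thmChains} and~\ref{thmTreesFromChains}, the trivial specializations close the cycle, and $(4)\Rightarrow(1)$ is handled by the same Menger-style argument of routing four pairwise edge-disjoint $v$--$r$ paths through a supposed cut of size at most $3$. No substantive differences from the paper's proof.
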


\begin{proof}
Theorem \ref{thmChains} gives us $(1)\Rightarrow(3)$. Theorem \ref{thmTreesFromChains} gives us $(2)\Rightarrow(4)$ and $(3)\Rightarrow(5)$. Trivially, we have $(3)\Rightarrow(2)$ and $(5)\Rightarrow(4)$. Therefore, we need only show $(4)\Rightarrow(1)$.

Assume for the sake of contradiction that $G$ has four edge-independent spanning trees rooted at some $r\in V(G)$, but is not $4$-edge-connected. Suppose $S\subseteq E(G)$ is an edge cut with $\abs{S}<4$. Consider a vertex $v$ in a component of $G-S$ not containing $r$. Using the paths in each of the edge-independent spanning trees, we find that there exist four edge-disjoint paths between $v$ and $r$. This contradicts the existence of $S$.
\end{proof}

\vfill

This material is based upon work supported by the National Science Foundation. Any
opinions, findings, and conclusions or recommendations expressed in this material are those
of the authors and do not necessarily reflect the views of the National Science Foundation.

\end{document}